\documentclass[11pt]{article}
\usepackage{amsthm}
\usepackage{amsmath}
\usepackage{amssymb}
\usepackage{graphicx}
\usepackage{float}
\usepackage{subfigure}% Support for small, `sub' figures and tables

\usepackage[margin=3cm]{geometry}

\theoremstyle{plain}% Theorem-like structures
\newtheorem{theorem}{Theorem}[section]

\newtheorem{lemma}[theorem]{Lemma}
\newtheorem{proposition}[theorem]{Proposition}

\newtheorem{definition}[theorem]{Definition}

%\theoremstyle{remark}

%________________________Writer's Macro___________
%\newtheorem{prop}{Proposition}
%\newtheorem{example}{Example}
%\newtheorem{problem}{Problem}
%\newtheorem{assumption}{Assumption}

\usepackage[ruled,linesnumbered] {algorithm2e}

\usepackage{amsmath}
\usepackage{amsthm}
\usepackage{amssymb}

\newcommand{\reals}{\mathbb{R}}
\newcommand{\Rn}{\mathbb{R}^n}
\newcommand{\Rm}{\mathbb{R}^m}
\newcommand{\calM}{\mathcal{M}}
\newcommand{\Exp}{\operatorname{Exp}}
\newcommand{\grad}{\operatorname{grad}}
\newcommand{\Hess}{\operatorname{Hess}}
\newcommand{\T}{\mathrm{T}}

\usepackage{color}

\usepackage[colorlinks=true,bookmarks=false,linkcolor=blue,urlcolor=blue,citecolor=blue,breaklinks=true]{hyperref}

\begin{document}

%\jvol{00} \jnum{00} \jyear{2015} \jmonth{July}

\title{Simple algorithms for optimization on Riemannian manifolds\\with constraints}

\author{
%\name{
	Changshuo Liu
\and Nicolas Boumal\thanks{The authors are with Princeton University, PACM and Mathematics Department. \newline \indent \ \ \ Contact: \texttt{nboumal@math.princeton.edu}.}
%}
%\received{v1.4}
}

\maketitle

\begin{abstract}
We consider optimization problems on manifolds with equality and inequality constraints. A large body of work treats constrained optimization in Euclidean spaces. In this work, we consider extensions of existing algorithms from the Euclidean case to the Riemannian case. Thus, the variable lives on a known smooth manifold and is further constrained. In doing so, we exploit the growing literature on unconstrained Riemannian optimization. For the special case where the manifold is itself described by equality constraints, one could in principle treat the whole problem as a constrained problem in a Euclidean space. The main hypothesis we test here is whether it is sometimes better to exploit the geometry of the constraints, even if only for a subset of them. Specifically, this paper extends an augmented Lagrangian method and smoothed versions of an exact penalty method to the Riemannian case, together with some fundamental convergence results. Numerical experiments indicate some gains in computational efficiency and accuracy in some regimes for minimum balanced cut, non-negative PCA and $k$-means, especially in high dimensions.

%\begin{keywords}
Keywords: Riemannian optimization; constrained optimization; differential geometry; augmented Lagrangian method; exact penalty method; nonsmooth optimization
%\end{keywords}

%\begin{classcode}
AMS classification:
65K05; % Mathematical programming methods
90C30: % Nonlinear programming
53A99; % classical differential geometry
%\end{classcode}

\end{abstract}

\section{Introduction}
We consider the following problem:
    \begin{equation}\label{problem:mcp}
    \begin{aligned}
    & \underset{x}{\text{min}}
    & & f(x)\\
    & \text{subject to}
    & & x \in {\cal{M}} \\
    & & & g_i(x)\leq 0 \text{ for } i \in {\cal{I}} = \{1, \dots, n\}, \\
   & & & h_j(x) = 0 \text{ for } j \in {\cal{E}}  = \{n+1, \dots, n+m\},
    \end{aligned}
    \end{equation}
where ${\cal{M}}$ is a Riemannian manifold and $f, \{g_i\}, \{h_j\}$ are twice continuously differentiable functions from ${\cal{M}}$ to $\mathbb{R}$. The problems of this class have extra constraints in addition to the manifold constraint. Following the convention, we call problem~\eqref{problem:mcp} an Equality Constrained Problem (ECP) when only equality constraints exist, and an Inequality Constrained Problem (ICP) when only inequality constraints exist. If both equality and inequality constraints are present, we call it a Mixed Constrained Problem (MCP). Such problems feature naturally in applications. For instance, non-negative principal component analysis (PCA) is formulated as an optimization problem on a sphere in $\Rn$ with non-negativity constraints on each entry~\cite{zass2007nonnegative}. As another example, $k$-means can be formulated as a constrained optimization problem on the Stiefel manifold~\cite{carson2017manifold}. We discuss these more in Section~\ref{sec:XP}.

Necessary and sufficient optimality conditions for the general problem class (\ref{problem:mcp}) were derived in~\cite{yang2014optimality} and also recently in~\cite{bergmann2018intrinsic}---we summarize them in the next section. Some algorithmic approaches have been put forward in \cite{dreisigmeyer2006equality, kovnatsky2016madmm, birgin2016augmented,khuzani2017stochastic, zhang2017primal}. Nevertheless, and somewhat surprisingly, we find that there has been no systematic effort to survey and compare some of the most direct approaches to solve~\eqref{problem:mcp} based on prior work on the same problem class without the manifold constraint~\cite{bertsekas2014constrained} and with only the manifold constraint~\cite{absil2009optimization}.

Part of the reason may be that, in many applications, the manifold ${\cal{M}}$ is a submanifold of a Euclidean space, itself defined by equality constraints. In such cases, the manifold constraint can be treated as an additional set of equality constraints, and the problem can be solved using the rich expertise gained over the years for constrained optimization in $\mathbb{R}^n$. There are also existing software packages for it, such as Lancelot, KNITRO and Algencan \cite{birgin2014practical, conn2013lancelot, byrd2006knitro}.

Yet, based on the literature for unconstrained optimization on manifolds, we see that if the manifold ${\cal{M}}$ is nice enough, it pays to exploit its structure fully. In particular, much is now understood about optimizing over spheres, orthogonal groups, the Stiefel manifold of orthonormal matrices, the set of fixed-rank matrices, and many more. Furthermore, embracing the paradigm of optimization on manifolds also allows us to treat problems of class (\ref{problem:mcp}) where ${\cal{M}}$ is an abstract manifold, such as the Grassmannian manifold of linear subspaces. Admittedly, owing to Whitney's embedding theorem, abstract manifolds can also be embedded in a Euclidean space, and hence even those problems could in principle be treated using algorithms from the classical literature, but the mere existence of an embedding is often of little practical use.

In this paper, we survey some of the more classical methods for constrained optimization in $\mathbb{R}^n$ and straightforwardly extend them to the more general class (\ref{problem:mcp}), while preserving and exploiting the smooth geometry of $\cal{M}$. For each method, we check if some of the essential convergence results known in $\mathbb{R}^n$ extend as well. Then, we set up a number of numerical experiments for a few applications and report performance profiles. Our purpose in doing so is to gain some perspective as to which methods are more likely to yield reliable generic software for this problem class.

\subsection{Contributions}

As a first contribution, we study the augmented Lagrangian method (ALM) and the exact penalty method on general Riemannian manifolds. For ALM, we study local and global convergence properties. For the exact penalty method, each iteration involves the minimization of a sum of maximum functions, which we call a mini-sum-max problem. For this nonsmooth optimization subproblem, we study and tailor two types of existing algorithms---subgradient descent and smoothing methods. In particular, we propose a robust subgradient method without gradient sampling for mini-sum-max problems, which may be of interest in itself. For smoothing methods, we study the effect of two classical smoothing functions: Log-sum-exp and `Linear-Quadratic + Huber loss'. 

As a second contribution, we perform numerical experiments on non-negative PCA, $k$-means and the minimum balanced cut problems to showcase the strengths and weaknesses of each algorithm. As a baseline, we compare our approach to the traditional approach, which would simply consider the manifold as an extra set of equality constraints. For this, we use \texttt{fmincon}: a general-purpose nonlinear programming solver in Matlab. For these problems, we find that some of our methods perform better than \texttt{fmincon} in high dimensional scenarios. Our solvers are generally slower in the low-dimensional case.

We present the algorithms and convergence analyses for ALM and the exact penalty method on Riemannian manifolds in Sections~\ref{sec:ALM} and~\ref{sec:EPM} respectively. Proofs are deferred to the appendix. Numerical experiments on various applications are reported in Section~\ref{sec:XP}.
 
\subsection{Related literature}
 Dreisigmeyer~\cite{dreisigmeyer2006equality} tackles ICPs on Euclidean submanifolds by pulling back the inequality constraints onto the tangent spaces of $\calM$, and running direct search methods there. Yang et al.~\cite{yang2014optimality} provide necessary optimality conditions for our problem class~\eqref{problem:mcp}. Bergmann and Herzog~\cite{bergmann2018intrinsic} extend a range of constraint qualifications from the Euclidean setting to the smooth manifold setting. Kovnatsky et al.~\cite{kovnatsky2016madmm} generalize the alternating direction method of multipliers (ADMM) to the Riemannian case---the method handles nonsmooth optimization on manifolds via variable splitting, which produces an ECP. In~\cite{birgin2016augmented}, Birgin et al.\ deal with MCP in the Euclidean case by splitting constraints into upper-level and lower-level, and perform ALM on upper-level constrained optimization problems in a search space confined by lower-level constraints. When ${\cal{M}}$ is a submanifold of a Euclidean space chosen to describe the lower-level constraints, our Riemannian ALM reduces to their method. Khuzani and Li~\cite{khuzani2017stochastic} propose a primal-dual method to address ICP on Riemannian manifolds with bounded sectional curvature in a stochastic setting. Zhang et al.~\cite{zhang2017primal} propose an ADMM-like primal-dual method for nonconvex, nonsmooth optimization problems on submanifolds of Euclidean space coupled with linear constraints. Weber and Sra~\cite{weber2017frank} recently study a Franke--Wolfe method on manifolds to design projection-free methods for constrained, geodesically convex optimization on manifolds.

\section{Preliminaries and notations} \label{sec:prelims}

We briefly review some relevant concepts from Riemannian geometry, following the notations of~\cite{absil2009optimization}. Let the Riemannian manifold ${\cal{M}}$ be endowed with a Riemannian metric $\langle\cdot ,\cdot \rangle_x$ on each tangent space $\mathrm{T}_x{\cal{M}}$, where $x$ is in ${\cal{M}}$. Let $\|\cdot\|_x$ be the associated norm. We often omit the subscript $x$ when it is clear from context. Throughout the paper, we assume that ${\cal{M}}$ is a complete, smooth, finite-dimensional Riemannian manifold.

\subsection{Gradients and Hessians on manifolds}

The gradient at $x$ of a smooth function $f \colon {\cal{M}} \to \reals$, $\operatorname{grad} f(x)$, is defined as the unique tangent vector at $x$ such that
\[
    \langle\operatorname{grad} f(x), v \rangle_x = \operatorname{D}\!f(x)[v],\quad \forall v\in \mathrm{T}_x{\cal{M}},
\]
where the right-hand side is the directional derivative of $f$ at $x$ along $v$.
Let $\mathrm{dist}(x,y)$ denote the Riemannian distance between $x,y\in{\cal{M}}$. For each $x\in{\cal{M}}$, let $\mathrm{Exp}_x \colon \mathrm{T}_x{\cal{M}}\rightarrow {\cal{M}}$ denote the exponential map at $x$ (that is, the map such that $t\mapsto\mathrm{Exp}_x(tv)$ is a geodesic passing through $x$ at $t = 0$ with velocity $v \in \mathrm{T}_x\calM$). The injectivity radius is defined as:
\begin{equation}\label{eq:injectivity}
    {\mathit{i}}({\cal{M}}) = \inf_{x\in{\cal{M}}} \sup \{\epsilon>0: \mathrm{Exp}_x|_{\{\eta\in \mathrm{T}_x{\cal{M}}: \|\eta\|< \epsilon\}}\textrm{ is a diffeomorphism}\}.
\end{equation}
For any $x,y\in{\cal{M}}$ with $\mathrm{dist}(x,y) < {\mathit{i}}({\cal{M}})$, there is a unique minimizing geodesic connecting them, which gives rise to a parallel transport operator ${\cal{P}}_{x\rightarrow y}:\mathrm{T}_x{\cal{M}}\rightarrow \mathrm{T}_y{\cal{M}}$ as an isometry between two tangent spaces. 

\begin{definition}[Hessian, \cite{absil2009optimization}, Definition 5.5.1]
    Given a smooth function $f \colon \calM \to \reals$, the $\mathrm{Riemannian\ Hessian}$ of $f$ at a point $x$ in ${\cal{M}}$ is the linear mapping $\mathrm{Hess} f(x)$ of $\mathrm{T}_x{\cal{M}}$ into itself defined by
    \[
        \mathrm{Hess} f(x)[\xi_x] = \nabla_{\xi_x}\operatorname{grad} f
    \]
    for all $\xi_x$ in $\mathrm{T}_x{\cal{M}}$, where $\nabla$ is the Riemannian connection on ${\cal{M}}$.
\end{definition}

Using the exponential map, one can also understand the Riemannian gradient and Hessian through the following identities~\cite[Eq.~(4.4) and Prop.~5.5.4]{absil2009optimization}:
\begin{align}
	\grad f(x) & = \grad(f \circ \Exp_x)(0_x), & \Hess f(x) & = \Hess(f \circ \Exp_x)(0_x),
	\label{eq:gradHesspullbackExp}
\end{align}
where $0_x$ is the zero vector in $\T_x\calM$ and $f \circ \Exp_x \colon \T_x\calM \to \reals$ is defined on a Euclidean space with a metric, so that its gradient and Hessian are defined in the usual sense. The composition $f \circ \Exp_x$ is also called the pullback of $f$ to the tangent space at $x$.

\subsection{Optimality conditions}\label{sec:optcon}
Let $\Omega$ denote the set of feasible points on ${\cal{M}}$ satisfying the constraints in~\eqref{problem:mcp}. At $x\in{\cal{M}}$, let ${\cal{I}}$ denote the set of $n$ inequality constraints, and ${\cal{E}}$ the set of $m$ equality constraints. Let ${\cal{A}}(x)$ denote the active set of constraints, that is,
\begin{align}
	{\cal{A}}(x) & = {\cal{E}}\cup \{i\in {\cal{I}}| g_i(x)=0\}.
\end{align}
The Langrangian of $(\ref{problem:mcp}$) is defined similarly to the Euclidean case as
\begin{equation}\label{lagrangian}
    {\cal{L}}(x,\lambda,\gamma) = f(x) + \sum_{i\in {\cal{I}}} \lambda_i g_i(x) + \sum_{j\in {\cal{E}}} \gamma_j h_j(x),
\end{equation}
where $\lambda_i$, $\gamma_j$ are vector entries of $\lambda \in \mathbb{R}^n$ and $\gamma \in \mathbb{R}^m$. Following \cite{yang2014optimality}, constraint qualifications and optimality conditions are generalized as follows:
\begin{definition}[LICQ, \cite{yang2014optimality}, eq.~(4.3)]
Linear independence constraint qualifications (LICQ) are said to hold at $x\in {\cal{M}}$ if
\begin{equation}\label{def:licq}
    \{\operatorname{grad} g_i(x), \operatorname{grad} h_j(x), i \in {\cal{A}}(x)\cap {\cal{I}}, j\in{\cal{E}} \} \text{ are linearly independent in $\mathrm{T}_x{\cal{M}}$}.
\end{equation}
\end{definition}

\begin{definition}[First-Order Necessary Conditions (KKT conditions), \cite{yang2014optimality}, eq.~(4.8)]\label{prop:fonc} Given an MCP as in (\ref{problem:mcp}), $x^*\in\Omega$ is said to satisfy KKT conditions if there exist Lagrange multipliers $\lambda^*$ and $\gamma^*$ such that the following hold:
    \begin{align}\nonumber
        &\operatorname{grad} f(x^*) + \sum_{i\in {\cal{I}}} \lambda_i^* \operatorname{grad} g_i(x) + \sum_{j\in {\cal{E}}} \gamma_i^* \operatorname{grad} h_j(x) = 0,\\ \label{cond:first}
        &h_j(x^*) = 0, \text{ for all $j\in {\cal{E}}$, and}\\\nonumber
        &g_i(x^*) \leq 0,\ \lambda_i^*\geq 0,\ \lambda_i^*g_i^{}(x^*) = 0  \text{ for all $i\in {\cal{I}}$.}
    \end{align}
\end{definition}

For the purpose of identifying second-order optimality conditions at $x^*$ with associated $\lambda^*$ and $\gamma^*$, consider the critical cone $F(x^*, \lambda^*, \gamma^*)$ inside the tangent space at $x^*$ defined as follows; see \cite[\S4.2]{yang2014optimality}\cite[\S12.4]{Nocedal2006NO}:

\begin{equation}\label{set:f2}
  v \in F(x^*,\lambda^*, \gamma^*) \Leftrightarrow \begin{cases}
            v\in \mathrm{T}_{x^*}{\cal{M}}, &\\
    \langle \operatorname{grad} h_j(x^*), v\rangle = 0& \textrm{ for all }j\in {\cal{E}} ,\\
    \langle \operatorname{grad} g_i(x^*), v\rangle = 0& \textrm{ for all }i\in {\cal{A}}(x^*)\cap{\cal{I}} \textrm{ with }\lambda_i^* >0\textrm{, and}\\
    \langle \operatorname{grad} g_i(x^*), v\rangle \leq 0& \textrm{ for all }i\in {\cal{A}}(x^*)\cap{\cal{I}} \textrm{ with }\lambda_i^* = 0. \\
  \end{cases}
\end{equation}

\begin{definition}[Second-Order Necessary Conditions (SONC), \cite{yang2014optimality}, Theorem 4.2]\label{def:sonc}
    Given an MCP as in (\ref{problem:mcp}), $x^*\in\Omega$ is said to satisfy SONCs if it satisfies KKT conditions with associated Lagrange multiplier $\lambda^*$ and $\gamma^*$, and if 
    \[
        \langle v, \mathrm{Hess}_x {\cal{L}} (x^*,\lambda^*,\gamma^*) v\rangle \geq 0, \textrm{ for any } v\in F(x^*, \lambda^*, \gamma^*),
    \]
    where the Hessian is taken with respect to the first variable of $\mathcal{L}$, on $\calM$.
\end{definition}

\begin{definition}[Second-Order Sufficient Conditions (SOSC), \cite{yang2014optimality}, Theorem 4.3] \label{def:sosc}
    Given an MCP as in (\ref{problem:mcp}), $x^*\in\Omega$ is said to satisfy SOSCs if it satisfies KKT conditions with associated Lagrange multiplier $\lambda^*$ and $\gamma^*$, and if
    \[
        \langle v, \mathrm{Hess}_x {\cal{L}} (x^*,\lambda^*,\gamma^*) v\rangle > 0, \textrm{ for any } v\in F(x^*, \lambda^*, \gamma^*),\ v\neq 0.
    \]
\end{definition}

\begin{proposition}[\cite{yang2014optimality}, Theorem 4.1, 4.2]\label{prop:sonc}
 If $x^*$ is a local minimum of a given MCP and LICQ holds at $x^*$, then $x^*$ satisfies KKT conditions and SONCs.
\end{proposition}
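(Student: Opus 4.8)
The plan is to reduce the statement to the classical Euclidean second-order theory by pulling the whole problem back to the tangent space $\T_{x^*}\calM$ through the exponential map at $x^*$. Since $\mathrm{D}\Exp_{x^*}(0_{x^*})$ is the identity on $\T_{x^*}\calM$, the inverse function theorem gives an open ball $B\subset\T_{x^*}\calM$ around $0_{x^*}$ on which $\Exp_{x^*}$ is a diffeomorphism onto an open neighborhood of $x^*$ in $\calM$. Define the pullbacks $\hat f = f\circ\Exp_{x^*}$, $\hat g_i = g_i\circ\Exp_{x^*}$, $\hat h_j = h_j\circ\Exp_{x^*}$ on $B$; these are $C^2$ because $f,g_i,h_j$ are $C^2$ and $\Exp_{x^*}$ is smooth. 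Because $\Exp_{x^*}$ maps $B$ diffeomorphically onto a neighborhood of $x^*$ and preserves constraint values (for instance $\hat g_i(\eta)=g_i(\Exp_{x^*}(\eta))$), the point $0_{x^*}$ is a local minimum of the Euclidean constrained problem
\[
    \min_{\eta\in B} \hat f(\eta) \quad\text{subject to}\quad \hat g_i(\eta)\le 0\ (i\in\mathcal{I}),\ \ \hat h_j(\eta)=0\ (j\in\mathcal{E}),
\]
posed on the Euclidean space $\T_{x^*}\calM$ equipped with the inner product $\langle\cdot,\cdot\rangle_{x^*}$, and the active set of this problem at $0_{x^*}$ coincides with $\mathcal{A}(x^*)$.

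The next step is to match the first- and second-order data of the two problems at the base point. By the first identity in~\eqref{eq:gradHesspullbackExp}, the Euclidean gradient of each pullback at $0_{x^*}$ equals the corresponding Riemannian gradient at $x^*$, i.e. $\grad\hat f(0_{x^*})=\grad f(x^*)$ and likewise for $\hat g_i,\hat h_j$; hence LICQ at $x^*$ in the sense of~\eqref{def:licq} is exactly the Euclidean LICQ for the pulled-back problem at $0_{x^*}$. Since the Lagrangian~\eqref{lagrangian} is affine in $(\lambda,\gamma)$, pullback commutes with forming it, so $\mathcal{L}(\cdot,\lambda,\gamma)\circ\Exp_{x^*}=\hat f+\sum_{i}\lambda_i\hat g_i+\sum_{j}\gamma_j\hat h_j=:\hat{\mathcal{L}}(\cdot,\lambda,\gamma)$; combining the definition of the Riemannian Hessian with the second identity in~\eqref{eq:gradHesspullbackExp} then gives $\Hess_x\mathcal{L}(x^*,\lambda,\gamma)=\nabla^2\hat{\mathcal{L}}(0_{x^*},\lambda,\gamma)$, where the right-hand side is the ordinary Euclidean Hessian on $\T_{x^*}\calM$. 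Finally, after these substitutions the critical cone $F(x^*,\lambda^*,\gamma^*)$ of~\eqref{set:f2} is precisely the standard critical cone of the Euclidean problem at the KKT pair $(0_{x^*},\lambda^*,\gamma^*)$.

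With these identifications in place, I would invoke the classical Euclidean result, e.g.~\cite[Thm.~12.5]{Nocedal2006NO}: since $0_{x^*}$ is a local minimum of the pulled-back problem and LICQ holds there, there exist (unique) multipliers $\lambda^*,\gamma^*$ satisfying the Euclidean KKT conditions, and $\langle v,\nabla^2\hat{\mathcal{L}}(0_{x^*},\lambda^*,\gamma^*)v\rangle\ge 0$ for every $v$ in the Euclidean critical cone. Transcribing both conclusions back through the correspondences above yields exactly the KKT conditions of Definition~\ref{prop:fonc} and the second-order necessary conditions of Definition~\ref{def:sonc} at $x^*$.

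I expect the only real subtlety to be the bookkeeping in the second paragraph: one must carefully check that the Riemannian Hessian of the Lagrangian, defined through the Riemannian connection, really does coincide with the Euclidean Hessian of the pulled-back Lagrangian at $0_{x^*}$, and that the inner product $\langle\cdot,\cdot\rangle_{x^*}$ is the one implicitly used in the Euclidean problem so that the orthogonality relations in~\eqref{set:f2} line up. Both follow from~\eqref{eq:gradHesspullbackExp} together with the fact that pullback commutes with the multiplier-affine Lagrangian, but they should be stated explicitly; everything else is a direct transfer of the Euclidean theory (and matches the argument underlying~\cite{yang2014optimality}).
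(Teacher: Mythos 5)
Your argument is correct: the paper offers no proof of this proposition, deferring entirely to the cited reference \cite{yang2014optimality}, and your reduction---pulling the problem back to $\T_{x^*}\calM$ via $\Exp_{x^*}$, matching gradients, Hessians of the Lagrangian, and the critical cone through~\eqref{eq:gradHesspullbackExp}, and then invoking the classical Euclidean KKT/SONC theorem under LICQ---is exactly the standard localization argument underlying that reference. The two subtleties you flag (that $\Exp_{x^*}$ is a second-order retraction so the Hessian identity holds, and that the inner product $\langle\cdot,\cdot\rangle_{x^*}$ is used consistently in the pulled-back problem and its critical cone) are indeed the only points requiring care, and both are covered by the identities the paper already records.
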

\begin{proposition}[\cite{yang2014optimality}, Theorem 4.3]\label{prop:sosc}
If $x^*$ satisfies SOSCs for a given MCP, then it is a strict local minimum.
\end{proposition}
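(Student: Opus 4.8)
The plan is to argue by contradiction, transplanting the classical Euclidean proof of second-order sufficiency (cf.\ \cite{Nocedal2006NO}) to the manifold setting by pulling everything back to the Euclidean space $\T_{x^*}\calM$ through the exponential map, which is defined on all of $\T_{x^*}\calM$ because $\calM$ is complete. Suppose $x^*$ satisfies the SOSCs with multipliers $\lambda^*,\gamma^*$ but is \emph{not} a strict local minimum. Then there is a sequence $\{x_k\}\subset\Omega$ with $x_k\neq x^*$, $x_k\to x^*$ and $f(x_k)\le f(x^*)$. Using Hopf--Rinow, I would pick for each $k$ a minimizing geodesic from $x^*$ to $x_k$, set $t_k = \mathrm{dist}(x^*,x_k)\to 0$, and let $d_k\in\T_{x^*}\calM$ be its unit initial velocity, so that $x_k = \Exp_{x^*}(t_k d_k)$; by compactness of the unit sphere of $\T_{x^*}\calM$, pass to a subsequence with $d_k\to d$, $\|d\|=1$.

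First I would show $d\in F(x^*,\lambda^*,\gamma^*)\setminus\{0\}$. For each $C^2$ function $c$ among $f,\{g_i\},\{h_j\}$, a first-order Taylor expansion of the scalar map $t\mapsto c(\Exp_{x^*}(td_k))$ at $t=0$---whose derivative at $0$ equals $\langle\grad c(x^*),d_k\rangle$ by \eqref{eq:gradHesspullbackExp}---gives $c(x_k) = c(x^*) + t_k\langle\grad c(x^*),d_k\rangle + O(t_k^2)$; dividing by $t_k$ and letting $k\to\infty$ (the remainder is uniform in $k$ since $\|d_k\|=1$ and the data are $C^2$ near $x^*$) yields $\langle\grad h_j(x^*),d\rangle = 0$ for $j\in{\cal E}$, $\langle\grad g_i(x^*),d\rangle\le 0$ for $i\in{\cal A}(x^*)\cap{\cal I}$, and $\langle\grad f(x^*),d\rangle\le 0$. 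Pairing $d$ with the KKT stationarity equation and using $\lambda_i^*\ge 0$, complementary slackness, and these sign relations gives $\langle\grad f(x^*),d\rangle\ge 0$, hence equality throughout and $\lambda_i^*\langle\grad g_i(x^*),d\rangle = 0$ for each $i$; in particular $\langle\grad g_i(x^*),d\rangle = 0$ whenever $\lambda_i^* > 0$, so $d$ lies in the critical cone and is nonzero.

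Next I would run the second-order argument on the pullback of the Lagrangian. For feasible $x_k$, since $h_j(x_k)=0$, $\lambda_i^* g_i(x_k)\le 0$ and $\lambda_i^* g_i(x^*)=0$,
\[
    {\cal L}(x_k,\lambda^*,\gamma^*) = f(x_k) + \sum_{i\in{\cal I}}\lambda_i^* g_i(x_k) + \sum_{j\in{\cal E}}\gamma_j^* h_j(x_k) \;\le\; f(x_k) \;\le\; f(x^*) \;=\; {\cal L}(x^*,\lambda^*,\gamma^*).
\]
Let $\phi = {\cal L}(\cdot,\lambda^*,\gamma^*)\circ\Exp_{x^*}\colon\T_{x^*}\calM\to\reals$. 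By \eqref{eq:gradHesspullbackExp} and the KKT stationarity condition, $\grad\phi(0_{x^*}) = \grad_x{\cal L}(x^*,\lambda^*,\gamma^*) = 0$ and $\Hess\phi(0_{x^*}) = \Hess_x{\cal L}(x^*,\lambda^*,\gamma^*)$, so a second-order Taylor expansion of the Euclidean function $\phi$ at $0_{x^*}$ along $t_k d_k$ gives
\[
    0 \;\ge\; \phi(t_k d_k) - \phi(0_{x^*}) \;=\; \tfrac{1}{2}t_k^2\,\langle d_k, \Hess_x{\cal L}(x^*,\lambda^*,\gamma^*)\,d_k\rangle + o(t_k^2).
\]
Dividing by $\tfrac12 t_k^2$ and letting $k\to\infty$ yields $\langle d,\Hess_x{\cal L}(x^*,\lambda^*,\gamma^*)\,d\rangle\le 0$ with $d\in F(x^*,\lambda^*,\gamma^*)$, $d\neq 0$, contradicting the SOSCs; hence $x^*$ is a strict local minimum.

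The hard part is not conceptual but lies in justifying that the Taylor remainders along the varying geodesics $t\mapsto\Exp_{x^*}(td_k)$ are uniform in $k$---this is precisely what legitimizes dividing by $t_k$ and $t_k^2$ and passing to the limit. I would handle this by restricting the $x_k$ to a fixed compact neighborhood of $x^*$, on which the pullbacks $c\circ\Exp_{x^*}$ and $\phi$ are $C^2$ with first and second derivatives bounded over the compact set of arguments $\{td_k : \|d_k\|=1,\ t \text{ small}\}$, so that Taylor's theorem with integral remainder applies with a bound independent of $k$. A secondary (routine) point is the appeal to completeness of $\calM$ (assumed in Section~\ref{sec:prelims}), which via Hopf--Rinow guarantees the minimizing geodesics used to define the $d_k$ exist.
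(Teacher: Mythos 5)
Your proof is correct. The paper does not actually prove this proposition---it imports it verbatim from Yang et al.\ (Theorem 4.3 of the cited reference)---but your argument is exactly the standard Euclidean second-order sufficiency proof transplanted to the manifold through the pullback $\Exp_{x^*}$, using the identities \eqref{eq:gradHesspullbackExp} to identify the Riemannian gradient and Hessian of $\mathcal{L}$ with those of the pullback; this is the same route the cited source takes, and your treatment of the limiting direction $d\in F(x^*,\lambda^*,\gamma^*)$ and of the uniformity of the Taylor remainders is sound.
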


\section{Riemannian augmented Lagrangian methods} \label{sec:ALM}

The augmented Lagrangian method (ALM) is a popular algorithm for constrained nonlinear programming of the form of (\ref{problem:mcp}) with ${\cal{M}} = \mathbb{R}^{d}$. At its core, ALM relies on the definition of the \emph{augmented} Lagrangian function~\cite[eq.~(4.3)]{birgin2014practical}:
\begin{equation}\label{eq:alm}
    {\cal{L}}_\rho(x, \lambda, \gamma) = f(x) + \frac{\rho}{2}\left(\sum_{j\in {\cal{E}}} \left(h_j(x)+\frac{\gamma_j}{\rho}\right)^2+ \sum_{i\in {\cal{I}}} \max\left\{0, \frac{\lambda_i}{\rho}+g_i(x)\right\}^2 \right),
\end{equation}
where $\rho > 0$ is a penalty parameter, and $\gamma \in \Rm, \lambda \in \Rn, \lambda \geq 0$. ALM alternates between updating $x$ and updating $(\lambda,\gamma)$. To update $x$, any algorithm for unconstrained optimization may be adopted to minimize~\eqref{eq:alm} with $(\lambda,\gamma)$ fixed. We shall call the chosen solver the \emph{subsolver}. To update $(\lambda,\gamma)$, a clipped gradient-type update rule is used.  A vast literature covers ALM in the Euclidean case. We direct the reader in particular to the recent monograph by Birgin and Mart\'inez~\cite{birgin2014practical}.

The Lagrangian function as defined in~\eqref{eq:alm} generalizes seamlessly to the Riemannian case simply by restricting $x$ to live on the manifold ${\cal{M}}$. Importantly, $\mathcal{L}_\rho$ is continuously differentiable in $x$ under our assumptions. The corresponding ALM algorithm is easily extended to the Riemannian case as well: subsolvers are now optimization algorithms for unconstrained optimization on manifolds. We refer to this approach as Riemannian ALM, or RALM; see Algorithm~\ref{algo:alm}, in which the clip operator is defined by
\begin{align*}
	\operatorname{clip}_{[a, b]}(x) = \max\{a, \min(b, x)\}.
\end{align*}

\begin{algorithm}[t]\label{algo:alm}
\SetAlgoLined
\SetKw{KwRequire}{Require: }
\SetKw{KwInput}{Input: }
\KwRequire{Riemannian manifold $\cal{M}$, twice continuously differentiable functions $f$, $\{g_i\}_{i\in{\cal{I}}}$, $\{h_j\}_{j\in{\cal{E}}} \colon \cal{M} \rightarrow \mathbb{R}$.}	 \\
\KwInput{Starting point $x_0\in{\cal{M}}$, starting Lagrangian vectors $\lambda^0 \in \mathbb{R}^n$, $\gamma^0 \in \mathbb{R}^m$, accuracy tolerance $\epsilon_{\min}$, starting accuracy $\epsilon_0 > 0$, starting penalty coefficient $\rho_0$, constants $\theta_\epsilon \in (0,1)$, $\theta_\rho > 1$, multiplier boundaries $\lambda^{\max}\in \mathbb{R}^n$, $\gamma^{\min}, \gamma^{\max}\in \mathbb{R}^m$ with $\gamma^{\min}_i \leq \gamma^{\max}_i$ for each $i\in{\cal{I}}$, ratio $\theta_{\sigma}\in (0,1)$, minimum step size $d_{\min}$.} \\
 \For{$k = 0,1,\dots$}{
    
    Compute $x_{k+1}$---an approximate solution to the following problem within a tolerance $\epsilon_{k}$:
    \begin{equation}\label{eq:almprob}
    \begin{aligned}
    & \underset{x\in {\cal{M}}}{\text{min}}
    & & {\cal{L}}_{\rho_k}(x, \lambda^k, \gamma^k).\\
    \end{aligned}
    \end{equation}

    \If{$\mathrm{dist}(x_k, x_{k+1}) < d_{\min}$ $\mathrm{and}$ $\epsilon_k \leq \epsilon_{\min}$}{
        Return $x_{k+1}$\;
    }
    $\gamma_j^{k+1} = \operatorname{clip}_{[\gamma_j^{\min}, \gamma_j^{\max}]}(\gamma_j^k + \rho_k h_j(x_{k+1}))$, for $j\in {\cal{E}}$\;
    $\lambda_i^{k+1}= \operatorname{clip}_{[\lambda_i^{\min},\lambda_i^{max}]}(\lambda_i^k + \rho_k g_i(x_{k+1}))$, for $i\in {\cal{I}}$\;
    $\sigma^{k+1}_i = \max\left\{g_i(x_{k+1}), - \frac{\lambda^{k}_i}{\rho_k}\right\}$, for $i\in {\cal{I}}$\;
    $\epsilon_{k+1} = \max\left\{\epsilon_{\min}, \theta_\epsilon \epsilon_k\right\}$\;
    \eIf{$k=0$ $\mathrm{or}$ $\max_{j\in {\cal{E}}, i\in {\cal{I}}}\left\{|h_j(x_{k+1})|, |\sigma^{k+1}_i|\right\} \leq \theta_{\sigma}\max_{j\in {\cal{E}}, i\in {\cal{I}}}\left\{|h_j(x_{k})|, |\sigma^{k}_i|\right\}$}{
        $\rho_{k+1} = \rho_k$\;
    }{
    	$\rho_{k+1} = \theta_\rho \rho_k$\;
	}
 }
\caption{Riemannian augmented Lagrangian method (RALM)}
\end{algorithm}
In practice, solving~\eqref{eq:almprob} (approximately) involves running any standard algorithm for smooth, unconstrained optimization on manifolds with warm-start at $x_k$. Various kinds of tolerances for this subproblem solve will be discussed, which lead to different convergence results. We note that this approach of separating out a subset of the constraints that have special, exploitable structure in ALM is in the same spirit as the general approach in~\cite{andreani2007augmented} and in~\cite[\S2.4]{bertsekas2014constrained}.

 Notice that, in Algorithm \ref{algo:alm}, there are safeguards designed for multipliers and a conditioned update on the penalty coefficient $\rho$---updates are executed only when constraint violations are shrinking fast enough, which helps alleviate the effect of ill-conditioning and improves robustness; see~\cite{birgin2014practical, kanzow2017example}. As each subproblem (\ref{eq:almprob}) is an unconstrained problem with sufficiently smooth objective function, various Riemannian optimization methods can be used. In particular, we mention the Riemannian gradient descent, non-linear conjugate gradients and trust-regions methods, all described in~\cite{absil2009optimization} and available in ready-to-use toolboxes~\cite{boumal2014manopt, pymanopt, roptlib}.

%\subsection{Global convergence properties}

Global and local convergence properties of ALM in the Euclidean case have been studied extensively; see~\cite{andreani2007augmented, andreani2011sequential, bertsekas1999nonlinear, birgin2016augmented, birgin2010global}, among others. Some of the results available in the literature are phrased in sufficiently general terms that they readily apply to the Riemannian case, even though the Riemannian case was not necessarily explicitly acknowledged. This applies to Proposition~\ref{prop:almglobal} below. To extend certain other results, minor modifications to the Euclidean proofs are necessary. Propositions~\ref{prop:almfirst} and \ref{prop:almsecond} are among those. Here and in the next section, we state some of the more relevant results explicitly for the Riemannian case. For the sake of completeness, we include full proofs in the appendix, stressing again that they are either existing proofs or simple adaptations of existing proofs.

Whether or not Algorithm~\ref{algo:alm} converges depends on the tolerances for the subproblems, and the ability of the subsolver to return a point that satisfies them. In general, solving the unconstrained subproblem~\eqref{eq:alm} to global optimality---even within a tolerance on the objective function value---is hard. In practice, that step is implemented by running a local optimization solver which, often, can only guarantee convergence to an approximate first- or second-order stationary point. Nevertheless, practice suggests that for many applications these local solvers perform well. An important question then becomes: assuming the subproblems are indeed solved within appropriate tolerances, does Algorithm~\ref{algo:alm} converge? In the following three propositions, we partially characterize the limit points generated by the algorithm assuming either that the subproblems are solved almost to global optimality, which is difficult to guarantee, or assuming approximate stationary points are computed, which we can guarantee~\cite{boumal2016globalrates,bento2017iterationcomplexity,zhang2018cubicregmanifold,agarwal2018arcfirst}.

We first consider the case when we have a global subsolver up to some tolerance on the cost for each iteration. This affords the following result:
\begin{proposition}\label{prop:almglobal}
In Algorithm \ref{algo:alm} with $\epsilon_{\min} = 0$ (so that the algorithm produces an infinite sequence with $\epsilon_k \to 0$), if at each iteration $k$ the subsolver produces a point $x_{k+1}$ satisfying
\begin{equation}\label{eq:almstoppingcrt2}
    {\cal{L}}_{\rho_k}(x_{k+1},\lambda^k,\gamma^k) \leq {\cal{L}}_{\rho_k}(z,\lambda^k,\gamma^k) + \epsilon_k,
\end{equation}
where $z$ is a feasible global minimizer of the original MCP, and if $\{x_k\}_{k=0}^\infty$ has a limit point $\overline{x}$, then $\overline{x}$ is a global minimizer of the original MCP.
\end{proposition}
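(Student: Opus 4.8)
The plan is to extract from the hypotheses enough control on the sequence $\{x_k\}$ and the penalty parameters $\{\rho_k\}$ to show that the limit point $\overline{x}$ is both feasible for the original MCP and achieves the optimal value $f(z)$. There are two cases depending on whether $\rho_k$ stays bounded or $\rho_k \to \infty$, and the argument will rely on passing to the subsequence converging to $\overline{x}$ throughout.

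First I would unpack the defining inequality~\eqref{eq:almstoppingcrt2}. Plugging $z$ (a feasible global minimizer) into the augmented Lagrangian~\eqref{eq:alm} and using feasibility of $z$ ($h_j(z) = 0$, $g_i(z) \le 0$), I would bound ${\cal{L}}_{\rho_k}(z, \lambda^k, \gamma^k)$ by $f(z)$ plus a term of order $\frac{1}{\rho_k}\|(\lambda^k, \gamma^k)\|^2$; since the multipliers are kept in a fixed compact box by the clip operations, this cross/quadratic term is $O(1/\rho_k)$ if $\rho_k \to \infty$, and is simply bounded by a constant over $\rho_k$ if $\rho_k$ stays bounded. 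Combined with~\eqref{eq:almstoppingcrt2} and $\epsilon_k \to 0$, this gives $\limsup_k {\cal{L}}_{\rho_k}(x_{k+1}, \lambda^k, \gamma^k) \le f(z)$ along the subsequence (using that in the bounded-$\rho$ case one must be slightly more careful — see below).

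Next, expanding ${\cal{L}}_{\rho_k}(x_{k+1}, \lambda^k, \gamma^k) = f(x_{k+1}) + \frac{\rho_k}{2}\big(\sum_{j} (h_j(x_{k+1}) + \gamma_j^k/\rho_k)^2 + \sum_i \max\{0, \lambda_i^k/\rho_k + g_i(x_{k+1})\}^2\big)$, I would argue feasibility of $\overline{x}$. If $\rho_k \to \infty$, the penalty terms must go to zero (since $f$ is continuous hence bounded on the convergent subsequence and the whole expression is bounded above by something tending to $f(z)$), which forces $h_j(x_{k+1}) + \gamma_j^k/\rho_k \to 0$ and $\max\{0, \lambda_i^k/\rho_k + g_i(x_{k+1})\} \to 0$; since $\gamma_j^k/\rho_k, \lambda_i^k/\rho_k \to 0$ by the multiplier bounds, continuity gives $h_j(\overline{x}) = 0$ and $g_i(\overline{x}) \le 0$. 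If instead $\rho_k$ is eventually constant, then by the update rule in Algorithm~\ref{algo:alm} the quantities $\max_{j,i}\{|h_j(x_{k+1})|, |\sigma_i^{k+1}|\}$ decrease geometrically, hence tend to $0$; unwinding the definition of $\sigma_i^{k+1} = \max\{g_i(x_{k+1}), -\lambda_i^k/\rho_k\}$ and using $\lambda_i^k \ge 0$, this again yields $h_j(\overline{x}) = 0$ and $g_i(\overline{x}) \le 0$ in the limit. Either way $\overline{x} \in \Omega$. Then dropping the nonnegative penalty term in the expansion gives $f(x_{k+1}) \le {\cal{L}}_{\rho_k}(x_{k+1}, \lambda^k, \gamma^k)$, so $f(\overline{x}) = \lim f(x_{k+1}) \le \limsup {\cal{L}}_{\rho_k}(x_{k+1},\lambda^k,\gamma^k) \le f(z)$; combined with $\overline{x} \in \Omega$ and global optimality of $z$, we get $f(\overline{x}) = f(z)$, so $\overline{x}$ is a global minimizer.

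The main obstacle I anticipate is the bounded-$\rho$ case: when $\rho_k \equiv \rho$ eventually, the bound on ${\cal{L}}_{\rho}(z, \lambda^k, \gamma^k)$ does not obviously shrink to $f(z)$, so one cannot immediately conclude $f(\overline{x}) \le f(z)$ from~\eqref{eq:almstoppingcrt2} alone. The resolution is to handle feasibility first via the geometric-decrease mechanism of the $\sigma$-test (which is precisely why that safeguard is in the algorithm), and then to note that once feasibility of the iterates' violations is driving to zero one can improve the estimate on ${\cal{L}}_\rho(z, \lambda^k, \gamma^k)$, or alternatively compare directly against $z$ using that $h_j(z)=0, g_i(z)\le0$ so that the only surviving terms in ${\cal{L}}_\rho(z,\lambda^k,\gamma^k) - f(z)$ are $\frac{1}{2\rho}\sum_j (\gamma_j^k)^2 + \frac{\rho}{2}\sum_i \max\{0,\lambda_i^k/\rho\}^2 - \sum_i \text{(something)}$, which must be reconciled with convergence of $(\lambda^k,\gamma^k)$ — in fact in the eventually-constant-$\rho$ regime one shows $\lambda_i^k \to 0$ for all $i$ with $g_i(\overline x) < 0$ and $\gamma_j^k, \lambda_i^k$ bounded, which suffices. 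I would organize the write-up to treat feasibility and optimality in that order, isolating the two penalty-regime cases as sub-claims.
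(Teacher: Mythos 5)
The paper does not write out a proof of this proposition: it cites Theorems 1 and 2 of \cite{birgin2010global} and observes that those arguments carry over verbatim to the manifold setting. Your plan follows exactly that route (feasibility first, then optimality, split according to whether $\rho_k$ stays bounded), and the case $\rho_k \to \infty$ is handled completely and correctly: bounding ${\cal{L}}_{\rho_k}(z,\lambda^k,\gamma^k) \le f(z) + \tfrac{1}{2\rho_k}\bigl(\|\gamma^k\|^2 + \|\lambda^k\|^2\bigr)$ via feasibility of $z$ and boundedness of the clipped multipliers, forcing the penalty terms at $x_{k+1}$ to vanish, and then dropping them to get $f(\overline{x}) \le f(z)$.

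The gap is in the optimality step of the bounded-$\rho$ branch, and your proposed resolution does not close it. Once $\rho_k \equiv \overline{\rho}$, the quantity ${\cal{L}}_{\overline{\rho}}(z,\lambda^k,\gamma^k) - f(z) = \tfrac{1}{2\overline{\rho}}\sum_j (\gamma_j^k)^2 + \tfrac{\overline{\rho}}{2}\sum_i \max\{0,\lambda_i^k/\overline{\rho} + g_i(z)\}^2$ does \emph{not} tend to zero in general: the equality-multiplier term survives whenever $\gamma^k$ has a nonzero limit (the generic situation at a KKT point), and showing $\lambda_i^k \to 0$ only for constraints inactive at $\overline{x}$ says nothing about these terms, nor about inequality constraints whose multipliers do not vanish. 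So the chain $f(\overline{x}) \le \limsup_k {\cal{L}}_{\overline{\rho}}(x_{k+1},\lambda^k,\gamma^k) \le f(z)$ breaks at the second inequality, and "dropping the nonnegative penalty term" on the left throws away exactly what is needed. The correct fix (as in the cited proof) is to \emph{keep} the left-hand penalty and show it dominates the right-hand one in the limit: pass to a subsequence along which $(\lambda^k,\gamma^k) \to (\overline{\lambda},\overline{\gamma})$; the $\theta_\sigma$-test gives $|h_j(x_{k+1})| \to 0$ and $\sigma_i^{k+1} \to 0$, so $\bigl(h_j(x_{k+1}) + \gamma_j^k/\overline{\rho}\bigr)^2 \to (\overline{\gamma}_j/\overline{\rho})^2$, matching the corresponding term at $z$; and the identity $\max\{0,\lambda_i^k/\overline{\rho} + g_i(x_{k+1})\} = \lambda_i^k/\overline{\rho} + \sigma_i^{k+1}$ shows the left-hand inequality term tends to $(\overline{\lambda}_i/\overline{\rho})^2$, which is at least $\lim_k \max\{0,\lambda_i^k/\overline{\rho}+g_i(z)\}^2$ since $g_i(z) \le 0$ and $\lambda_i^k \ge 0$. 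Cancelling these dominated terms yields $f(\overline{x}) \le f(z)$. With that substitution the bounded case goes through; everything else in your write-up (including the feasibility arguments in both regimes) is sound.
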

\begin{proof}
See Theorems 1 and 2 in \cite{birgin2010global}. The proofs in that reference apply verbatim to the Riemannian case.
\end{proof}

However, most Riemannian optimization methods only return approximately first- or second-order approximate stationary points. In the first-order case, we have the following:

\begin{proposition}\label{prop:almfirst}
In Algorithm \ref{algo:alm} with $\epsilon_{\min} = 0$, if at each iteration $k$ the subsolver produces a point $x_{k+1}$ satisfying
\begin{equation}\label{eq:almstoppingcrt1}
    \|\operatorname{grad}_x {\cal{L}}_{\rho_k}(x_{k+1}, \lambda^k, \gamma^k)\| \leq \epsilon_k,
\end{equation}
and if the sequence $\{x_k\}_{k=0}^\infty$ has a limit point $\overline{x}\in\Omega$ where LICQ is satisfied, then $\overline{x}$ satisfies KKT conditions of the original MCP.
\end{proposition}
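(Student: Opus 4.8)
The plan is to adapt the classical Euclidean argument that ALM drives iterates to KKT points (see, e.g., \cite{bertsekas2014constrained,birgin2014practical,andreani2007augmented}), isolating the two spots where the manifold intervenes. First I would differentiate the augmented Lagrangian~\eqref{eq:alm} in $x$; a direct computation gives
\[
    \grad_x \mathcal{L}_{\rho_k}(x,\lambda^k,\gamma^k) = \grad f(x) + \sum_{j\in\mathcal{E}} \big(\gamma_j^k + \rho_k h_j(x)\big)\,\grad h_j(x) + \sum_{i\in\mathcal{I}} \max\!\big\{0,\,\lambda_i^k + \rho_k g_i(x)\big\}\,\grad g_i(x),
\]
so the coefficients here are exactly the \emph{pre-clip} multiplier updates of Algorithm~\ref{algo:alm}. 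Writing $\bar\gamma^{k+1}_j := \gamma_j^k + \rho_k h_j(x_{k+1})$ and $\bar\lambda^{k+1}_i := \max\{0,\lambda_i^k + \rho_k g_i(x_{k+1})\}\ge 0$, the subproblem stopping criterion~\eqref{eq:almstoppingcrt1} states that $\grad f(x_{k+1}) + \sum_j \bar\gamma^{k+1}_j \grad h_j(x_{k+1}) + \sum_i \bar\lambda^{k+1}_i \grad g_i(x_{k+1})$ has norm at most $\epsilon_k$. I would then pass to a subsequence with $x_{k+1}\to\bar x$ and, for $k$ large, parallel-transport the tangent vectors in this expression to $\T_{\bar x}\calM$ along the minimizing geodesics from $x_{k+1}$ to $\bar x$ (equivalently, work in a fixed chart around $\bar x$); since parallel transport is an isometry and $f,\{g_i\},\{h_j\}$ are $C^2$, this yields $\grad h_j(x_{k+1})\to\grad h_j(\bar x)$ and $\grad g_i(x_{k+1})\to\grad g_i(\bar x)$ in $\T_{\bar x}\calM$, while norms in~\eqref{eq:almstoppingcrt1} are unchanged.

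The core step is to show the multiplier estimates $\{\bar\lambda^{k+1}\},\{\bar\gamma^{k+1}\}$ are bounded. Suppose not; set $t_k := 1 + \|\bar\lambda^{k+1}\| + \|\bar\gamma^{k+1}\| \to \infty$ along a subsequence, divide~\eqref{eq:almstoppingcrt1} by $t_k$, and extract a further subsequence so that $\bar\lambda^{k+1}/t_k \to \mu \ge 0$, $\bar\gamma^{k+1}/t_k \to \nu$, with $(\mu,\nu)\ne 0$. Passing to the limit gives $\sum_{j\in\mathcal{E}}\nu_j\grad h_j(\bar x) + \sum_{i\in\mathcal{I}}\mu_i\grad g_i(\bar x) = 0$. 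One then checks $\mu_i=0$ for every inactive $i$ (those with $g_i(\bar x)<0$): if $\rho_k\to\infty$ this is immediate, since $\lambda_i^k$ stays in its bounded clipping interval while $\rho_k g_i(x_{k+1})\to-\infty$, so $\bar\lambda^{k+1}_i=0$ for $k$ large; and if $\rho_k$ is eventually constant, the penalty-update test of Algorithm~\ref{algo:alm} must pass for all large $k$, forcing $|h_j(x_{k+1})|\to 0$ and $\sigma^{k+1}_i\to 0$, and since $g_i(x_{k+1})\to g_i(\bar x)<0$ the definition of $\sigma^{k+1}_i$ then forces $\lambda_i^k\to 0$ and again $\bar\lambda^{k+1}_i=0$ for $k$ large. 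Hence $(\mu,\nu)$ witnesses a nontrivial linear dependence among $\{\grad h_j(\bar x)\}_{j\in\mathcal{E}}\cup\{\grad g_i(\bar x): i\in\mathcal{A}(\bar x)\cap\mathcal{I}\}$, contradicting LICQ~\eqref{def:licq} at $\bar x$.

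With boundedness established, I would extract a subsequence along which $\bar\lambda^{k+1}\to\lambda^*\ge 0$ and $\bar\gamma^{k+1}\to\gamma^*$, and let $k\to\infty$ in~\eqref{eq:almstoppingcrt1} (using $\epsilon_k\to 0$) to obtain $\grad f(\bar x) + \sum_i \lambda^*_i\grad g_i(\bar x) + \sum_j \gamma^*_j\grad h_j(\bar x)=0$. Feasibility ($h_j(\bar x)=0$, $g_i(\bar x)\le 0$) is the standing hypothesis $\bar x\in\Omega$, and the dichotomy above shows $\bar\lambda^{k+1}_i=0$ eventually whenever $g_i(\bar x)<0$, so $\lambda^*_i g_i(\bar x)=0$ for all $i\in\mathcal{I}$. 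Together with $\lambda^*\ge 0$, these are exactly the KKT conditions of Definition~\ref{prop:fonc} at $\bar x$.

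I expect the main obstacle to be the boundedness step: ruling out a contribution of the inactive multipliers to the limiting linear dependence forces the case split on whether the penalty parameter stays bounded, and the bounded-penalty case requires extracting the right consequence from the coupled update rules for $\sigma^{k+1}$, $\lambda^{k}$ and $\rho_k$ (this is also where the safeguards in Algorithm~\ref{algo:alm} are used). The only genuinely Riemannian modification to the Euclidean proof is the passage to a normal neighborhood (or chart) of $\bar x$ so that the gradients in~\eqref{eq:almstoppingcrt1}, which a priori lie in the varying tangent spaces $\T_{x_{k+1}}\calM$, can be compared with those in $\T_{\bar x}\calM$; this is routine and, since it concerns only a neighborhood of the single point $\bar x$, does not require a positive injectivity radius for $\calM$.
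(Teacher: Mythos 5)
Your argument is correct and follows essentially the same route as the paper's proof: identify the pre-clip multipliers as the coefficients of $\grad_x \mathcal{L}_{\rho_k}$, eliminate the inactive multipliers via the same dichotomy on whether $\rho_k$ stays bounded (using the $\sigma^{k}$ safeguard in the bounded case), rule out unbounded multiplier estimates by normalizing and contradicting LICQ, and pass to the limit after parallel transport to $\T_{\overline{x}}\calM$. The one step you dismiss as routine---that $\mathcal{P}_{x_{k}\to\overline{x}}\grad f(x_{k})\to\grad f(\overline{x})$---is the only place the paper does extra work, proving first (Lemmas~\ref{lem:smothparalleltransport} and~\ref{lemma:ode}) that the radially parallel-transported vector field is smooth so that this limit is legitimate.
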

\begin{proof}
The proof is an easy adaptation of that of Theorem 4.2 in~\cite{andreani2007augmented}: see Appendix~\ref{appendixalmfirst}.
\end{proof}

In the second-order case, we consider problems with equality constraints only, because when inequality constraints are present the augmented function (\ref{eq:alm}) may not be twice differentiable. (See \cite{bertsekas2014constrained} for possible solutions to this particular issue.) We consider the notion of Weak Second-Order Necessary Conditions on manifolds, which parallels the Euclidean case definition in~\cite{andreani2017second, guo2013second}. 
\begin{definition}[Weak Second-Order Necessary Conditions (WSONC)] \label{def:wsonc}
    Given an MCP as in \eqref{problem:mcp}, a feasible point $x^*\in \Omega$ satisfies WSONC if it satisfies KKT conditions with multipliers $\lambda^*\in \mathbb{R}^n_+$ and $\gamma^*\in \mathbb{R}^m$ such that\footnote{Note that this condition involves $\cal{L}$ as defined in Section~\ref{sec:optcon}, not ${\cal{L}}_\rho$.}
    \[
        \langle v, \mathrm{Hess}_{x}{\cal{L}}(x^*,\lambda^*)v\rangle \geq 0,\textrm{ for any } v \in {\cal{C}}^W(x^*),
    \]
    where 
    \[
        v \in {\cal{C}}^W(x^*) \Leftrightarrow \begin{cases}
            v\in \mathrm{T}_{x^*}{\cal{M}}, &\\
        \langle \operatorname{grad} h_j(x^*), v\rangle = 0& \textrm{ for all }j\in {\cal{E}},\textrm{ and}\\
        \langle \operatorname{grad} g_i(x^*), v\rangle = 0& \textrm{ for all }i\in {\cal{A}}(x^*)\cap{\cal{I}}.\\
        \end{cases}
    \]
\end{definition}
Note that the cone ${\cal{C}}^W$ in Definition~\ref{def:wsonc} is smaller than the cone $F$~\eqref{set:f2} used to define SONC. Even in Euclidean space, most algorithms do not converge to feasible SONC points in general, so we would not expect more in the Riemannian case; see~\cite{andreani2017second, gould1999note}. $C^W$ is called weak critical cone in~\cite[Def.~2.2]{andreani2017second}.

\begin{proposition}\label{prop:almsecond}
Consider an ECP (problem~\eqref{problem:mcp} without inequalities) with $|\cal{E}| < \mathrm{dim}({\cal{M}})$. In Algorithm~\ref{algo:alm} with $\epsilon_{\min} = 0$, suppose that at each iteration $k$ the subsolver produces a point $x_{k+1}$ satisfying
\begin{equation}\label{eq:almstoppingcrt11}
    \|\operatorname{grad}_x {\cal{L}}_{\rho_k}(x_{k+1}, \lambda^k, \gamma^k)\| <\epsilon_k
    \end{equation} 
    and 
    \begin{equation}\label{eq:almstoppingcrt12}
    \mathrm{Hess}_x {\cal{L}}_{\rho_k}(x_{k+1}, \lambda^k, \gamma^k) \succeq -\epsilon_k I
    \end{equation}
    (meaning all eigenvalues of the Hessian are at or above $-\epsilon_k$).
    If the sequence generated by the algorithm has a limit point $\overline{x}\in\Omega$ where LICQ is satisfied, then $\overline{x}$ satisfies the WSONC conditions of the original ECP.
\end{proposition}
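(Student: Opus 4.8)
The plan is to adapt to the Riemannian setting the Euclidean second-order analysis of the augmented Lagrangian method for equality-constrained problems (as in~\cite{andreani2017second, guo2013second}). Throughout I work along a subsequence (not relabeled) such that $x_{k+1} \to \overline{x}$. Two elementary observations drive the argument. First, for an ECP the identity $\frac{\rho}{2}\left(h_j(x) + \frac{\gamma_j}{\rho}\right)^2 = \gamma_j h_j(x) + \frac{\rho}{2} h_j(x)^2 + \frac{\gamma_j^2}{2\rho}$ shows that, setting $\tilde\gamma_j := \gamma_j + \rho\, h_j(x)$ (exactly the \emph{un-clipped} multiplier estimate appearing in Algorithm~\ref{algo:alm}), the function ${\cal L}_\rho(\cdot, \gamma)$ differs from the ordinary Lagrangian ${\cal L}(\cdot, \tilde\gamma)$ of~\eqref{lagrangian} only by $\frac{\rho}{2}\sum_j h_j(\cdot)^2$ up to an additive constant. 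Second, a short computation with the Leibniz rule for the Riemannian connection $\nabla$ gives, with $\tilde\gamma_j^{k+1} := \gamma_j^k + \rho_k h_j(x_{k+1})$, the identities $\grad_x {\cal L}_{\rho_k}(x_{k+1}, \gamma^k) = \grad_x {\cal L}(x_{k+1}, \tilde\gamma^{k+1})$ and $\Hess_x {\cal L}_{\rho_k}(x_{k+1}, \gamma^k) = \Hess_x {\cal L}(x_{k+1}, \tilde\gamma^{k+1}) + \rho_k \sum_{j\in{\cal E}} G_j(x_{k+1})$, where $G_j(x)$ is the positive semidefinite rank-one operator $v \mapsto \langle \grad h_j(x), v\rangle \grad h_j(x)$ on $\T_x\calM$ (the cross term from differentiating $h_j^2$ twice).

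Next I would identify the limiting multiplier. Because LICQ holds at $\overline{x}$ and $x \mapsto \grad h_j(x)$ is continuous, there are a neighborhood of $\overline{x}$ and a constant $c > 0$ on which $\big\|\sum_{j\in{\cal E}} \beta_j \grad h_j(x)\big\| \geq c\|\beta\|$ for all $\beta \in \Rm$. Combined with~\eqref{eq:almstoppingcrt11} and $\epsilon_k \leq \epsilon_0$, this bounds $\{\tilde\gamma^{k+1}\}$, so after passing to a further subsequence $\tilde\gamma^{k+1} \to \gamma^*$. Letting $k \to \infty$ in the gradient identity, and using that $\overline{x} \in \Omega$ is feasible, shows $\overline{x}$ is a KKT point with (by LICQ, unique) multiplier $\gamma^*$. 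This essentially reproves Proposition~\ref{prop:almfirst} in the equality-only case; all I need from it is that this particular sequence $\tilde\gamma^{k+1}$ converges to $\gamma^*$.

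For the second-order conclusion, fix an arbitrary $v \in {\cal C}^W(\overline{x})$, i.e., $v \in \T_{\overline{x}}\calM$ with $\langle \grad h_j(\overline{x}), v\rangle = 0$ for all $j \in {\cal E}$. For $k$ large enough that $x_{k+1}$ lies within the injectivity radius of $\overline{x}$, let $w_k \in \T_{x_{k+1}}\calM$ be the parallel transport ${\cal P}_{\overline{x} \to x_{k+1}} v$ with its orthogonal projection onto $\operatorname{span}\{\grad h_j(x_{k+1})\}_{j\in{\cal E}}$ subtracted off. By construction $\langle \grad h_j(x_{k+1}), w_k\rangle = 0$ for all $j$; moreover, since $v$ is orthogonal to each $\grad h_j(\overline{x})$ and (using the uniform lower bound above) that projection depends continuously on the point, the subtracted term tends to $0$, so ${\cal P}_{x_{k+1}\to\overline{x}} w_k \to v$. (The hypothesis $|{\cal E}| < \dim(\calM)$ ensures this construction is not vacuous; if ${\cal C}^W(\overline{x}) = \{0\}$ there is nothing to prove.) Applying the Hessian bound~\eqref{eq:almstoppingcrt12} to $w_k$, the $\rho_k$ term vanishes since $\langle \grad h_j(x_{k+1}), w_k\rangle = 0$, leaving $\langle w_k, \Hess_x {\cal L}(x_{k+1}, \tilde\gamma^{k+1}) w_k\rangle \geq -\epsilon_k \|w_k\|^2$. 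Passing to the limit --- using that $f, \{h_j\}$ are twice continuously differentiable and parallel transport varies continuously, so that ${\cal P}_{x_{k+1}\to\overline{x}} \circ \Hess_x {\cal L}(x_{k+1}, \tilde\gamma^{k+1}) \circ {\cal P}_{\overline{x}\to x_{k+1}} \to \Hess_x {\cal L}(\overline{x}, \gamma^*)$, together with $w_k \to v$ and $\epsilon_k \to 0$ --- yields $\langle v, \Hess_x {\cal L}(\overline{x}, \gamma^*) v\rangle \geq 0$. As $v \in {\cal C}^W(\overline{x})$ was arbitrary, $\overline{x}$ satisfies WSONC.

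The only genuinely new ingredient relative to the Euclidean proof is the Hessian identity in the first step, which comes from the Euclidean computation upon replacing ordinary differentiation with the Leibniz rule for $\nabla$. I expect the main nuisance to be the bookkeeping for the test vectors $w_k$, which live in distinct tangent spaces: making precise what ``$w_k \to v$'' means and justifying the limit of the quadratic forms. This is routine once everything is transported back to $\T_{\overline{x}}\calM$ and one uses that $x_{k+1}$ is eventually inside the injectivity radius of $\overline{x}$, but it is the step deserving the most care in a full write-up.
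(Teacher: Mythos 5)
Your proposal is correct and follows the same overall skeleton as the paper's proof: work with the unclipped multiplier $\tilde\gamma^{k+1}_j = \gamma^k_j + \rho_k h_j(x_{k+1})$, decompose $\Hess_x {\cal L}_{\rho_k}$ into $\Hess_x {\cal L}(\cdot,\tilde\gamma^{k+1})$ plus $\rho_k$ times a sum of rank-one terms built from the $\grad h_j$, test against vectors orthogonal to the constraint gradients at $x_{k+1}$ so that the $\rho_k$ term drops out, and pass to the limit. The one place where you genuinely diverge is the construction of those test vectors. The paper builds, for each $d \in {\cal C}^W(\overline{x})$, a \emph{smooth local vector field} $d_x$ with $\langle d_x, \grad h_j(x)\rangle \equiv 0$ by working in a coordinate chart and invoking the implicit function theorem (solving for the first $|{\cal E}|$ components; this is where the hypothesis $|{\cal E}| < \dim\calM$ is used structurally). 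You instead parallel-transport $v$ to $\T_{x_{k+1}}\calM$ and subtract its orthogonal projection onto $\operatorname{span}\{\grad h_j(x_{k+1})\}$, using a quantitative LICQ bound to show the projection vanishes in the limit. Your construction is coordinate-free, avoids the implicit function theorem entirely, and only needs the test vectors along the sequence rather than a whole vector field; the price is the bookkeeping you flag, namely justifying the convergence of the quadratic forms via ${\cal P}_{x_{k+1}\to\overline{x}}\circ\Hess_x{\cal L}(x_{k+1},\tilde\gamma^{k+1})\circ{\cal P}_{\overline{x}\to x_{k+1}} \to \Hess_x{\cal L}(\overline{x},\gamma^*)$, which is where Lemma~\ref{lem:smothparalleltransport}-type smoothness of parallel transport in a normal neighborhood is needed. (A side benefit of your route: your direct boundedness argument for $\tilde\gamma^{k+1}$ via the uniform lower bound $\|\sum_j\beta_j\grad h_j(x)\|\geq c\|\beta\|$ is cleaner than re-invoking the case analysis of Proposition~\ref{prop:almfirst}, and your projection construction does not actually need $|{\cal E}| < \dim\calM$ except to make ${\cal C}^W(\overline{x})$ potentially nontrivial.)
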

\begin{proof}
	The proof is adapted from Section 3 in~\cite{andreani2017second}: see Appendix~\ref{appendixalmsecond}.
\end{proof}

\section{Exact penalty method} \label{sec:EPM}
Another standard approach to handle constraints is the so-called exact penalty method. As a replacement for the constraints, the method supplements the cost function with a weighted $L_1$ penalty for violating the constraints. This leads to a nonsmooth, unconstrained optimization problem.
%which in contrast to quadratic penalty methods, for which the penalty coefficient may need to grow to infinity to ensure constraint satisfaction, leading to ill conditioning.
In the Riemannian case, to solve problem (\ref{problem:mcp}), this approach suggests solving the following program, where $\rho > 0$ is a penalty weight:
    \begin{equation}\label{l1sumpenalty:eq}
    \begin{aligned}
    & \underset{x\in {\cal{M}}}{\text{min}}
    & & f(x) + \rho \left(\sum_{i\in{\cal{I}}}\text{max}\{0, g_i(x)\} + \sum_{j\in{\cal{E}}}|h_j(x)|\right).\\ 
    \end{aligned}
    \end{equation} 
In the Euclidean case, it is known that only a finite penalty weight $\rho$ is needed for exact satisfaction of constraints, hence the name~\cite[Ch.~15, 17]{Nocedal2006NO}. We have the following analogous property in the Riemannian case.
\begin{proposition}\label{l1sum:prop}
    If $x^*$ is a local minimum for problem~\eqref{problem:mcp} and it satisfies SOSC with KKT multipliers $\lambda^*, \gamma^*$, then for any $\rho$ such that $\rho > \max_{i\in{\cal{I}}, j\in{\cal{E}}}\{|\lambda_i^*|, |\gamma_j^*|\}$, $x^*$ is also a local minimum for~\eqref{l1sumpenalty:eq}.
\end{proposition}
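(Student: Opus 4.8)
The plan is to reduce the statement to the classical Euclidean exact-penalty theorem by pulling everything back to the tangent space at $x^*$ through the exponential map, and then to run the standard contradiction argument there. Since $\Exp_{x^*}$ is a diffeomorphism from a neighbourhood of the origin $0_{x^*}$ in $\T_{x^*}\calM$ onto a neighbourhood of $x^*$ in $\calM$, it suffices to prove that $0_{x^*}$ is a local minimizer of the pullback $\tilde P_\rho := \tilde f + \rho\big(\sum_{i\in{\cal I}}\max\{0,\tilde g_i\} + \sum_{j\in{\cal E}}|\tilde h_j|\big)$, where $\tilde f = f\circ\Exp_{x^*}$, $\tilde g_i = g_i\circ\Exp_{x^*}$, $\tilde h_j = h_j\circ\Exp_{x^*}$ are $C^2$ functions on the Euclidean space $(\T_{x^*}\calM,\langle\cdot,\cdot\rangle_{x^*})$. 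Writing $\tilde{\cal L}(\cdot) = \tilde f + \sum_i\lambda^*_i\tilde g_i + \sum_j\gamma^*_j\tilde h_j$ for the pullback of ${\cal L}(\cdot,\lambda^*,\gamma^*)$, I would first use the identities~\eqref{eq:gradHesspullbackExp} to transfer the hypotheses at $x^*$ to $0_{x^*}$: the KKT stationarity condition (Definition~\ref{prop:fonc}) becomes $\nabla\tilde{\cal L}(0_{x^*}) = 0$; feasibility and complementary slackness become $\tilde h_j(0_{x^*}) = 0$ and $\lambda^*_i\tilde g_i(0_{x^*}) = 0$; and SOSC (Definition~\ref{def:sosc}) becomes $\langle v,\nabla^2\tilde{\cal L}(0_{x^*})v\rangle > 0$ for every nonzero $v$ in the critical cone, which under this identification is exactly $F(x^*,\lambda^*,\gamma^*)$ of~\eqref{set:f2}. (The hypothesis that $x^*$ is a local minimum of~\eqref{problem:mcp} is then not needed beyond SOSC, which already implies it by Proposition~\ref{prop:sosc}.)

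Next I would argue by contradiction: suppose $0_{x^*}$ is not a local minimizer of $\tilde P_\rho$, so there is a sequence $y_k\to 0_{x^*}$, $y_k\neq 0_{x^*}$, with $\tilde P_\rho(y_k) < \tilde P_\rho(0_{x^*}) = \tilde f(0_{x^*})$ (the last equality because $0_{x^*}$ is feasible). Write $y_k = t_k d_k$ with $t_k = \|y_k\|\to 0^+$, $\|d_k\| = 1$, and pass to a subsequence with $d_k\to d$, $\|d\| = 1$. A key elementary observation is that, since $\rho > |\lambda^*_i|,|\gamma^*_j|$ and $\lambda^*\geq 0$, the pointwise bounds $\rho\max\{0,\tilde g_i(y)\}\geq\lambda^*_i\tilde g_i(y)$ and $\rho|\tilde h_j(y)|\geq\gamma^*_j\tilde h_j(y)$ hold for every $y$, so that $\tilde P_\rho(y)\geq\tilde{\cal L}(y)$ everywhere. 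For the first-order step I would divide $\tilde P_\rho(y_k) - \tilde f(0_{x^*}) < 0$ by $t_k$ and let $k\to\infty$: inactive inequalities drop out (for large $k$ the term $\max\{0,\tilde g_i(y_k)\}$ vanishes) and, by first-order Taylor expansion together with~\eqref{eq:gradHesspullbackExp}, writing $a_i := \langle\grad g_i(x^*),d\rangle$ and $b_j := \langle\grad h_j(x^*),d\rangle$, one obtains
\[
\langle\grad f(x^*),d\rangle + \rho\Big(\sum_{i\in{\cal A}(x^*)\cap{\cal I}}\max\{0,a_i\} + \sum_{j\in{\cal E}}|b_j|\Big)\leq 0.
\]
Substituting $\grad f(x^*) = -\sum_i\lambda^*_i\grad g_i(x^*) - \sum_j\gamma^*_j\grad h_j(x^*)$ (the gradient part of KKT, with complementary slackness restricting the sum to active $i$) rearranges this into $\sum_i\big(\rho\max\{0,a_i\}-\lambda^*_i a_i\big) + \sum_j\big(\rho|b_j|-\gamma^*_j b_j\big)\leq 0$; each summand is nonnegative, so each is zero. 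Because $\rho>|\gamma^*_j|$ this forces $b_j = 0$ for all $j\in{\cal E}$, and because $\rho>\lambda^*_i$ it forces $a_i\leq 0$ for all active $i$, with $a_i = 0$ whenever $\lambda^*_i > 0$. Hence $d\in F(x^*,\lambda^*,\gamma^*)$ and $d\neq 0$.

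For the second-order step, I would combine $\tilde P_\rho(y_k)\geq\tilde{\cal L}(y_k)$ with $\tilde{\cal L}(0_{x^*}) = \tilde f(0_{x^*})$ to get $\tilde{\cal L}(y_k) < \tilde{\cal L}(0_{x^*})$; since $\nabla\tilde{\cal L}(0_{x^*}) = 0$, a second-order Taylor expansion gives $\tfrac12 t_k^2\langle d_k,\nabla^2\tilde{\cal L}(0_{x^*})d_k\rangle + o(t_k^2) < 0$, so dividing by $t_k^2$ and letting $k\to\infty$ yields $\langle d,\Hess_x{\cal L}(x^*,\lambda^*,\gamma^*)d\rangle\leq 0$ (again via~\eqref{eq:gradHesspullbackExp}). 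Since $d\in F(x^*,\lambda^*,\gamma^*)\setminus\{0\}$, this contradicts SOSC, completing the proof — and in fact showing $x^*$ is a \emph{strict} local minimizer of~\eqref{l1sumpenalty:eq}. The part I expect to require the most care is the bookkeeping in the first-order step: discarding the inactive inequalities cleanly in the limit, keeping the active/inactive split straight when invoking complementary slackness, and checking that it is precisely the \emph{strict} inequality $\rho>\max_{i\in{\cal I},j\in{\cal E}}\{|\lambda^*_i|,|\gamma^*_j|\}$ that confines $d$ to the critical cone $F$ rather than to a larger cone on which SOSC would say nothing; everything else is the pullback dictionary~\eqref{eq:gradHesspullbackExp} and routine Taylor expansion.
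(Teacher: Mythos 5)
Your proposal is correct and follows essentially the same route as the paper's proof in Appendix~C: argue by contradiction with a sequence $y_k\to x^*$, pull everything back through $\Exp_{x^*}$, normalize to a limiting direction, use a first-order expansion plus KKT stationarity to confine that direction to the critical cone, and then a second-order Taylor expansion of the Lagrangian (combined with the pointwise bound that the penalized cost dominates the Lagrangian when $\rho$ exceeds the multipliers) to contradict SOSC. One point in your favour: by substituting the KKT gradient identity into the first-order inequality and arguing summand by summand, you place $d$ directly in the critical cone $F(x^*,\lambda^*,\gamma^*)$ of~\eqref{set:f2} (in particular forcing $\langle\grad g_i(x^*),d\rangle=0$ whenever $\lambda_i^*>0$), which is exactly the cone on which SOSC applies, whereas the paper's first-order step only places $\bar\eta$ in the larger cone $F'$ of~\eqref{set:f1} and then relies on the extra term $P(y_k)$ and a case analysis to close the gap.
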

\begin{proof}
The proof resembles that of Theorem 6.9 in \cite{ruszczynski2006nonlinear}: see Appendix~\ref{appendl1sum:prop}.
\end{proof}
Notice that the resulting penalized cost function is nonsmooth: such problems may be challenging in general. Fortunately, the cost function in~\eqref{l1sumpenalty:eq} is a sum of maximum functions and absolute values functions: this special structure can be exploited algorithmically. We explore two general strategies: first, we explore smoothing techniques; then, we explore a particular nonsmooth Riemannian optimization algorithm.

%&&&&&&&&&&&&&____________________________________________

\subsection{Smoothing technique}\label{sec:REPMS}

We discuss two smoothing methods for~\eqref{l1sumpenalty:eq}. A first approach is to note that the absolute value function can be written as a max function: $|x| = \max\{x, -x\}$. Thus, the nonsmooth part of the cost function in~\eqref{l1sumpenalty:eq} is a linear combination of max functions between two terms. A popular smoothing for a two-term maximum is the log-sum-exp function~\cite{chen1995smoothing}: $\max\{a, b\} \approx u\log(e^{a/u}+e^{b/u})$, with smoothing parameter $u > 0$. This yields the following smooth, unconstrained problem on a manifold, where $\rho, u$ are fixed constants:
    \begin{equation}\label{eq:qlse}
    \begin{aligned}
    & \underset{x\in {\cal{M}}}{\text{min}}
    & & Q^{\textrm{lse}}(x,\rho, u) = f(x) +\rho\sum_{i\in{\cal{I}}} u\log (1+e^{g_i(x)/u}) +\rho\sum_{j\in{\cal{E}}} u\log(e^{h_j(x)/u}+e^{-h_j(x)/u}).\\ 
    \end{aligned}
    \end{equation}
Another common approach is to smooth the absolute value and the max in~\eqref{l1sumpenalty:eq} separately, using respectively a pseudo-Huber loss~\cite{cambier2016robust} and a linear-quadratic loss~\cite{pinar1994smoothing}. Here, still with smoothing parameter $u > 0$, we use $|x| \approx \sqrt{x^2 + u^2}$ and $\max\{0, x\} \approx {\cal{P}}(x,u)$ where
\[
{\cal{P}}(x,u) =  \begin{cases}
            0 &\textrm{ if } x\leq 0\\
            \frac{x^2}{2u}&\textrm{ if } 0\leq x\leq u\\
            x-\frac{u}{2} &\textrm{ if } x\geq u.\\
        \end{cases}
\] Such approximation yields, for fixed constants $\rho, u$, the following problem:
    \begin{equation}\label{eq:qlqh}
    \begin{aligned}
    & \underset{x\in {\cal{M}}}{\text{min}}
    & &     Q^{\textrm{lqh}}(x,\rho, u) = f(x) + \rho\sum_{i\in{\cal{I}}} {\cal{P}}(g_i(x),u)+\rho\sum_{j\in{\cal{E}}} \sqrt{h_j(x)^2+u^2}.\\ 
    \end{aligned}
    \end{equation}

Note that $Q^{\textrm{lse}}$ and $Q^{\textrm{lqh}}$ are both continuously differentiable and thus we can optimize them with smooth solvers on manifolds. In view of Proposition \ref{l1sum:prop}, in the absence of smoothing, there exists a threshold such that, if $\rho$ is set above that threshold, then optima of the penalized problem coincide with optima of the target problem. However, we often do not know this threshold, and furthermore setting $\rho$ at a high value with a poor initial iterate can lead to poor conditioning and slow convergence. A common practice is to set a relatively low initial $\rho$, optimize, and iteratively increase $\rho$ and re-optimize with a warm start. This is formalized in Algorthm~\ref{algo:smoothing}, which we here call Riemannian Exact Penalty Method via Smoothing (REPMS). In the algorithm and in the discussion below, $\mathrm{update}_\rho$ is a (fixed) Boolean flag indicating whether $\rho$ is dynamically increased or not: this allows to discuss both versions of the algorithm.

\begin{algorithm}[h]
\SetAlgoLined
\SetKw{KwRequire}{Require: }
\SetKw{KwInput}{Input: }
\KwRequire{Riemannian manifold ${\cal{M}}$, twice continuously differentiable functions $f$, $\{g_i\}_{i\in{\cal{I}}}$, $\{h_j\}_{j\in{\cal{E}}} \colon {\cal{M}} \rightarrow \mathbb{R}$.} \\
\KwInput{Starting point $x_0 \in \cal{M}$, accuracy tolerance $\epsilon_{\min}$, starting accuracy $\epsilon_0$, starting penalty coefficient $\rho_0$, starting approx-accuracy $u_0$, minimum approx-accuracy $u_{\min}$, constants $\theta_\epsilon, \theta_u\in (0,1)$, $\theta_\rho > 1$, $\tau \geq 0$, $Q\in\{Q^{\mathrm{lse}}, Q^{\mathrm{lqh}}\}$, minimum step length $d_{\min}$, $\mathrm{update}_\rho\in \{\mathrm{True,False}\}$.}\\
 \For{k = 0,1,2\dots}{
    To obtain $x_{k+1}$, choose any subsolver to approximately solve
    \begin{equation}
    \begin{aligned}
    & \underset{x\in {\cal{M}}}{\text{min}}
    & & Q(x,\rho_k,u_k)\\
    \end{aligned}
    \end{equation}
    with warm-start at $x_k$ and stopping criterion
    \begin{align*}
	    \|\operatorname{grad} Q(x,\rho_k,u_k)\| \leq \epsilon_k.
    \end{align*}\\
    \If{$\mathrm{dist}(x_k, x_{k+1}) < d_{\min}$ and $\epsilon_k \leq \epsilon_{\min}$ and $u_k \leq u_{\min}$}{
        Return $x_{k+1}$\;
    }
    $\epsilon_{k+1} = \max\{\epsilon_{\min}, \theta_\epsilon \epsilon_k\};\ u_{k+1} = \max\{u_{\min}, \theta_u u_k\}$\;
    \eIf{$\mathrm{update}_\rho =\mathrm{True}$ \rm{and} ($k=0$ \text{or} $\max_{j\in {\cal{E}}, i\in {\cal{I}}}\left\{|h_j(x_{k+1})|, g_i(x_{k+1})\right\}\geq \tau$)}{
        $\rho_{k+1} = \theta_\rho\rho_k$
    }{
    $\rho_{k+1} = \rho_k$\;
    }
 }
\caption{Riemannian Exact Penalty Method via Smoothing (REPMS)}\label{algo:smoothing}
\end{algorithm}

The stopping criterion parameters $(d_{\min},u_{\min},\epsilon_{\min})$ are lower-bounds introduced in the algorithm for practical purposes. The algorithm terminates when the step size is too small while the approximation accuracy is high and the tolerance for the subsolver is low. On the other hand, if $u_k$ is too small, numerical difficulties may arise in these two approximation functions. Numerical concerns aside, in theory (exact arithmetic), if we set these parameters to 0, then the following convergence result holds, similar to its Euclidean counterpart of quadratic penalty method \cite[\S17.1]{Nocedal2006NO}. 
\begin{proposition}\label{prop:smoothconv}
In Algorithm \ref{algo:smoothing}, suppose we set $d_{\min} = u_{\min}=\epsilon_{\min} = 0$ and $\mathrm{update}_\rho = \mathrm{False}$. If the sequence $\{x_k\}$ produced by the subsolver admits a feasible limit point $\overline{x}$ where LICQ conditions hold, then $\overline{x}$ satisfies KKT conditions for~\eqref{problem:mcp}.
\end{proposition}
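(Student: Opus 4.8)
The plan is to follow the classical convergence proof for the quadratic penalty method (e.g.\ \cite[\S17.1]{Nocedal2006NO}), adapted to the manifold setting and to the two specific smoothings $Q^{\mathrm{lse}}$ and $Q^{\mathrm{lqh}}$. Since $\mathrm{update}_\rho = \mathrm{False}$ the penalty weight is constant, $\rho_k \equiv \rho_0 =: \rho > 0$, and since $u_{\min} = \epsilon_{\min} = 0$ with $\theta_u, \theta_\epsilon \in (0,1)$ we have $u_k \to 0$ and $\epsilon_k \to 0$. Let $K \subseteq \{1, 2, \dots\}$ be an index set along which $x_k \to \overline{x}$; each such $x_k$ was returned by the subsolver at iteration $k-1$, so it satisfies $\|\operatorname{grad} Q(x_k, \rho, u_{k-1})\| \leq \epsilon_{k-1}$. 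Using linearity of $\operatorname{grad}$ and the scalar chain rule $\operatorname{grad}(\phi \circ a)(x) = \phi'(a(x))\,\operatorname{grad} a(x)$, a direct computation gives, for both choices of $Q$,
\[
\operatorname{grad} Q(x, \rho, u) = \operatorname{grad} f(x) + \sum_{i \in {\cal{I}}} \lambda_i(x,u)\, \operatorname{grad} g_i(x) + \sum_{j \in {\cal{E}}} \gamma_j(x,u)\, \operatorname{grad} h_j(x),
\]
where for $Q^{\mathrm{lse}}$ one has $\lambda_i(x,u) = \rho\, \frac{e^{g_i(x)/u}}{1 + e^{g_i(x)/u}}$ and $\gamma_j(x,u) = \rho\, \frac{e^{h_j(x)/u} - e^{-h_j(x)/u}}{e^{h_j(x)/u} + e^{-h_j(x)/u}}$, while for $Q^{\mathrm{lqh}}$ one has $\lambda_i(x,u) = \rho\, {\cal{P}}'(g_i(x),u)$ and $\gamma_j(x,u) = \rho\, \frac{h_j(x)}{\sqrt{h_j(x)^2 + u^2}}$, with ${\cal{P}}'(\cdot,u)$ the (continuous) derivative of ${\cal{P}}(\cdot,u)$. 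The crucial structural observation is that in every case $\lambda_i(x,u) \in [0, \rho]$ and $\gamma_j(x,u) \in [-\rho, \rho]$, uniformly in $x \in {\cal{M}}$ and $u > 0$.

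Next I would extract KKT multipliers. Set $\lambda_i^{(k)} = \lambda_i(x_k, u_{k-1})$ and $\gamma_j^{(k)} = \gamma_j(x_k, u_{k-1})$. By the uniform bounds, $(\lambda^{(k)}, \gamma^{(k)})_{k \in K}$ lies in a compact box, so after passing to a subsequence $K' \subseteq K$ we may assume $\lambda_i^{(k)} \to \lambda_i^* \in [0,\rho]$ and $\gamma_j^{(k)} \to \gamma_j^* \in [-\rho, \rho]$. Working in a fixed coordinate chart around $\overline{x}$ (which contains $x_k$ for all large $k \in K'$) and using that $\operatorname{grad} f, \operatorname{grad} g_i, \operatorname{grad} h_j$ are continuous vector fields because the data are $C^2$, I pass to the limit in $\|\operatorname{grad} Q(x_k,\rho,u_{k-1})\| \leq \epsilon_{k-1} \to 0$ to get $\operatorname{grad} f(\overline{x}) + \sum_i \lambda_i^* \operatorname{grad} g_i(\overline{x}) + \sum_j \gamma_j^* \operatorname{grad} h_j(\overline{x}) = 0$, i.e.\ the stationarity part of the conditions in Definition~\ref{prop:fonc}. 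Dual feasibility $\lambda^* \geq 0$ is immediate, and primal feasibility ($g_i(\overline{x}) \le 0$, $h_j(\overline{x}) = 0$) is the standing hypothesis on $\overline{x}$.

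It then remains to verify complementary slackness $\lambda_i^* g_i(\overline{x}) = 0$ for all $i \in {\cal{I}}$, which is where the decay of the smoothing weights at inactive constraints enters. If $g_i(\overline{x}) = 0$ the identity is trivial; if $g_i(\overline{x}) < 0$, then $g_i(x_k) \to g_i(\overline{x}) < 0$, so $g_i(x_k) \le \tfrac{1}{2} g_i(\overline{x}) < 0$ for all large $k \in K'$. For $Q^{\mathrm{lqh}}$ this forces ${\cal{P}}'(g_i(x_k), u_{k-1}) = 0$, hence $\lambda_i^{(k)} = 0$ and $\lambda_i^* = 0$; for $Q^{\mathrm{lse}}$, since $u_{k-1} \to 0^+$ we get $g_i(x_k)/u_{k-1} \to -\infty$, so $\frac{e^{g_i(x_k)/u_{k-1}}}{1 + e^{g_i(x_k)/u_{k-1}}} \to 0$ and again $\lambda_i^* = 0$. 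In either case $\lambda_i^* g_i(\overline{x}) = 0$, completing the KKT conditions. I expect the only real obstacles to be bookkeeping: carrying both smoothings through a single argument, respecting the index shift between ``the iterate returned at step $k$'' and ``the $k$-th point'', and making the limit of tangent vectors at moving base points rigorous (a chart around $\overline{x}$ suffices). Finally, it is worth noting that, in contrast to the genuine quadratic-penalty proof, LICQ plays no role in this particular argument because the smoothing derivatives are bounded by construction, so the multiplier estimates are automatically bounded; it is retained in the statement only for uniformity with Propositions~\ref{prop:almfirst} and~\ref{prop:almsecond}.
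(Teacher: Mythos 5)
Your proposal is correct and follows essentially the same route as the paper's proof: define smoothed multipliers via the chain rule, use their uniform boundedness (in $[0,\rho]$ and $[-\rho,\rho]$) to extract convergent subsequences, pass to the limit in the subsolver's stationarity condition, and use $u_k \to 0$ together with $g_i(x_k) \le c < 0$ to kill the multipliers of inactive constraints. The only differences are cosmetic---you compare tangent vectors at moving base points in a chart where the paper uses parallel transport, and you spell out the $Q^{\mathrm{lqh}}$ case which the paper dismisses as analogous---and your observation that LICQ is never actually invoked is consistent with the paper's own argument, which likewise does not use it.
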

\begin{proof}
See Appendix~\ref{appendlsmoothconv}.
\end{proof}
The conditions of this proposition are only likely to hold if $\rho_0$ (the initial penalty) is sufficiently large, as described in Proposition~\ref{l1sum:prop}. Instead of trying to set $\rho_0$ above the unknown threshold, we may increase $\rho$ in every iteration. When $\mathrm{update}_\rho = \mathrm{True}$, the update of $\rho$ is a heuristic featured in~\cite{pinar1994smoothing} in the Euclidean case: it gives a conditioned increase on the penalty parameter. Intuitively, when the obtained point is far from feasible, the penalty coefficient is likely too small so that the penalty parameter is increased.

\subsection{A subgradient method for sums of maximum functions}\label{sec:REPMSD}
Instead of smoothing, one may attempt to optimize the nonsmooth objective function directly. In this pursuit, the subgradient descent algorithm on Riemannian manifolds, and its variants, have received much attention in recent years; see \cite{absil2017collection, grohs2016varepsilon, hosseini2016line}. These algorithms work for general locally Lipschitz objective functions on manifolds, and would work here as well. However, notice that (\ref{l1sumpenalty:eq}) takes the exploitable form of minimizing a sum of maximum functions:
    \begin{equation}\label{eq:epmobjfunc}
    \begin{aligned}
    & \underset{x\in {\cal{M}}}{\text{min}}
    & & f(x) + \sum_{i\in{\cal{I}}}\text{max}\{0, \rho g_i(x)\} + \sum_{i\in{\cal{E}}}\text{max}\{\rho h_i(x), - \rho h_i(x)\}.
    \end{aligned}
    \end{equation}
We hence propose a robust version of subgradient descent for sums of maximum functions, which may be of interest in its own right. We refer to it as Riemannian Exact Penalty Method via Subgradient Descent (REPMSD).
However, due to the lengthy specification of this method and its poor performance in numerical experiments, we will only give an overview here and report its performance in Section~\ref{sec:XP}. 

In the Euclidean case, subgradient descent with line search performs two steps---finding a good descent direction and performing line search to get the next point. To find a descent direction, one often wants access to the subdifferential at the current point, because it contains a descent direction. This generalizes to the Riemannian case. We refer readers to \cite{ hosseini2016line, grohs2016varepsilon, clarke1990optimization} for the definition of \textit{generalized subdifferential}, which we denote as $\partial f(x)$ for $f\colon{\cal{M}} \to \mathbb{R}$. Note that obtaining full information of the subdifferential is generally difficult, and thus many algorithms \textit{sample} the gradients around the current point to approximate it. However, for a sum of maximum functions, the subdifferential is directly accessible. This class of problems, which includes~\eqref{eq:epmobjfunc}, takes the form:
    \begin{equation}\label{problem:minisummax}
    \begin{aligned}
    & \underset{x\in {\cal{M}}}{\text{min}}
    & & f(x)=\sum_{i=1}^m \max \{f_{i,j}(x)| j = 1, \ldots, n_i\},\\
    \end{aligned}
    \end{equation}
where each function $f_{i,j} \colon \calM \to \reals$ is twice continuously differentiable. Notice that
\begin{equation}\label{eq:minimaxequi}
f(x) = \sum_{i=1}^m \max \{f_{i,j}(x)| j = 1,\ldots, n_i\} = \max_{1\leq j_i\leq n_i\text{ for } i=1\dots m}{\sum_{i=1}^mf_{i,j_i}(x)}.
\end{equation}
For a given $i$ in $1, \ldots, m$ and a given $x\in{\cal{M}}$, let ${\cal{I}}_i$ index the set of functions $f_{i,j}$ which attain the maximum for that $i$:
\begin{align*}
	{\cal{I}}_i & = \{1\leq j \leq n_i \ | \ f_{i,j} (x) = \max\{f_{i,j_i}(x)|j_i = 1\dots n_i\}\}.
\end{align*}
It is clear that for any choice of $j_i$'s (one for each $i$) such that $\sum_i f_{i,j_i}(x) = f(x)$, we have $j_i\in{\cal{I}}_i$ for all $i$. Thus, from the regularity of the maximum function as defined in~\cite{clarke1990optimization} and Proposition~2.3.12 in that same reference, exploiting the fact that pullbacks $f\circ \mathrm{Exp}_x$ are defined on the tangent space $\T_x\calM$ which is a linear subspace, it can be shown using standard definitions of subdifferentials that
\[
    \partial (f\circ \mathrm{Exp}_x) (0_x) = \mathrm{Conv}\left\{\sum_{i=1}^m\operatorname{Grad} (f_{i,j_i}\circ \mathrm{Exp}_x) (0_x) \  \bigg\vert \textrm{ all possible choices of } j_i\in {\cal{I}}_i \right\},
\]
where we use the notation $\mathrm{Grad}$ to stress that this is a Euclidean gradient of the pullback on the tangent space, and the operator $\mathrm{Conv}$ returns the convex hull of given vectors. To be explicit, for each $i$, pick some $j_i$ in $\mathcal{I}_i$: this produces one vector by summation over $i$ as indicated; the convex hull is taken over all possible choices of the $j_i$'s. Then, with Proposition~2.5 of~\cite{hosseini2011generalized}, analogously to~\eqref{eq:gradHesspullbackExp}, we see that the generalized subdifferential of $f$ at $x$ is given by
\begin{align}
    \partial f (x) = \mathrm{Conv}\left\{\sum_{i=1}^m\operatorname{grad} f_{i,j_i}(x) \  \bigg\vert \textrm{ all possible choices of } j_i\in {\cal{I}}_i \right\}.
    \label{eq:subdifffminsummax}
\end{align}
%One can further verify that this is equivalent to
%\begin{align}
%	\partial f(x) & = \sum_{i=1}^m \partial \max \{f_{i,j}(x)| j\in {\cal{I}}_i\},
%\end{align}
%where $\sum$ here denotes the Minkowski sum of sets.
This gives an explicit formula for the subdifferential of $f$ on $\calM$. Whilst the traditional approach is often to get a descent direction directly from this subdifferential or to sample around the point, numerically, it makes sense to relax the condition and to replace ${\cal{I}}_i$ with an approximate subdifferential based on the (larger) set
\[
    {\cal{I}}_{i,\epsilon} = \{1\leq j \leq n_i \ | \  f_{i,j}(x) \geq \max \{f_{i,j}(x)| j = 1, \ldots, n_i\} - \epsilon\},
\]
where $\epsilon > 0$ is a tolerance. Then, one looks for a descent direction in
the subdifferential~\eqref{eq:subdifffminsummax} extended by substituting ${\cal{I}}_{i,\epsilon}$ for ${\cal{I}}_{i}$. For the latter, the classical approach with $\epsilon = 0$ is to look for a tangent vector of minimum norm in the convex hull: this involves solving a convex quadratic program. We here compute a minimum norm vector in the extended convex hull.
%$\sum_{i=1}^m \mathrm{Conv}\{\operatorname{grad} f_{i,j_i}(x)|j_i\in {\cal{I}}_{i,\epsilon}\}$.
For the rest of the algorithm, we perform a classical Wolfe line search and use a limited-memory version of Riemannian BFGS (LRBFGS) updates based on the now well-known observation that BFGS works surprisingly well with nonsmoothness on Euclidean spaces~\cite{lewis2009nonsmooth} as well as on manifolds~\cite{hosseini2016line}---unfortunately we could not run full Riemannian BFGS as this is expensive on high dimensional manifolds. For details on Wolfe line search and BFGS, see Chapters~3, 8 and~9 of \cite{Nocedal2006NO}.

In closing, we note that there exist other methods for nonsmooth optimization, some of which apply on manifolds as well. In particular, we mention proximal point algorithms~\cite{parikh2014proximalalgorithms,bento2017iteration}, Douglas--Rachford-type methods~\cite{bergmann2016parallel} and iteratively reweighted least squares~\cite{chatterjee2013robustrotationaveraging}. We did not experiment with these methods here.

\section{Numerical experiments and discussion}\label{sec:XP}

\subsection{Problems and data}

We describe three applications which can be modeled within the framework of~\eqref{problem:mcp}. For each, we describe how we construct `baskets' of problem instances. These instances will be used to assess the performance of various algorithms. Code and datasets to reproduce the experiments are freely available.\footnote{\url{https://github.com/losangle/Optimization-on-manifolds-with-extra-constraints}.}

\subsubsection{Minimum balanced cut for graph bisection}
The minimum balanced cut problem is a graph problem: given an undirected graph, one seeks to partition the vertices into two clusters (i.e., bisecting the graph) such that the number of edges between clusters is as small as possible, while simultaneously ensuring that the two clusters have about the same size. A spectral method called ratio cut tackles this problem via optimization where the number of crossing edges and the imbalance are penalized in the objective function. However, in~\cite{lang2006fixing}, the author notes that for `power law graphs'---graphs whose degrees follow a power law distribution---this approach fails to enforce balance. Hence, the author proposes a method with stronger constraints to enforce balance: we sketch it below.

Let $L\in \mathbb{R}^{n\times n}$ denote the discrete Laplacian matrix of the given graph ($L = D - A$, where $D$ is the diagonal degree matrix and $A$ is the adjacency matrix). Furthermore, let $x \in \mathbb{R}^n$ be an indicator vector with entries in $\{-1, 1\}$ indicating which cluster each vertex belongs to. One can check that $\frac{1}{4}x^TLx$ evaluates to the number of edges crossing the two clusters. Let $e$ be the vector of 1's with length $n$. To enforce balance, $x^Te = 0$ is imposed, i.e., the number of vertices in the two clusters is the same (this only applies to graphs with an even number of vertices). However, directly solving this integer programming problem is difficult. The authors thus transform this discrete optimization problem into a continuous one. As a first step, the constraints $x_i = \pm 1$ for all $i$ are replaced by $\mathrm{diag}(xx^T) = e$, and the constraint $x^Te = 0$ is replaced by $(x^Te)^2 = e^T(xx^T)e = 0$. Then, $x \in \reals^n$ is relaxed to a matrix $X \in \reals^{n\times k}$ for some chosen $k$ in $1\ldots n$. Carrying over the constraints gives $\mathrm{diag}(XX^T) = e$, so that each row of $X$ is a unit vector, and $e^T(XX^T)e = 0$ is imposed. The problem then becomes:
\begin{equation} \label{eq:minbalcutXXt}
    \begin{aligned}
    & \underset{X\in \mathbb{R}^{n\times k}}{\text{min}}
    & & -\frac{1}{4}\mathrm{tr}(LXX^T)\\
    & \text{subject to}
   & &  \mathrm{diag}(XX^T) = e, \quad e^TXX^Te = 0,
    \end{aligned}
\end{equation}
 For $k = 1$, the problem is equivalent to the original problem. For $k = n$, the problem is equivalent to solving a (convex) semidefinite program (SDP) by considering $XX^T$ as a positive semidefinite matrix ($X$ can be recovered from Cholesky factorization). One may then use a standard clustering algorithm (for example $k$-means) on the rows of $X$ to obtain two clusters. By choosing $k > 1$, the problem becomes continuous (as opposed to discrete), which much extends the class of algorithms that we may apply to solve this problem. Furthermore, since for $k = n$ the problem is essentially convex (being equivalent to an SDP), we heuristically expect that for $k > 1$ the problem may be easier to solve than with $k = 1$.

To keep the dimensionality of the optimization problem low, it is of interest to consider $k > 1$ smaller than $n$, akin to a Burer--Monteiro approach~\cite{burer2003nonlinear}. We notice that $\mathrm{diag}(XX^T) = e$ defines an oblique manifold (a product of $n$ unit spheres in $\reals^k$). Thus, we can view~\eqref{eq:minbalcutXXt} as a problem on the oblique manifold with a single constraint:
\begin{equation} \label{minbalancedcut}
    \begin{aligned}
    & \underset{X\in \mathrm{Oblique}(n, k)}{\text{min}}
    & & -\frac{1}{4}\mathrm{tr}(X^TLX)\\
    & \text{subject to}
   & & e^TXX^Te = 0.
    \end{aligned}
\end{equation}
This fits our problem class~\eqref{problem:mcp} with $\calM = \mathrm{Oblique}(n, k)$ as an embedded Riemannian submanifold of $\reals^{n \times k}$ and a single quadratic equality constraint.\\

\textbf{Input.}
We generate power law graphs with the Barab\'{a}si--Albert algorithm~\cite{barabasialbert2002graphs}. It starts off with a seed of a small connected graph, iteratively adds a vertex to the graph, and attaches $l$ edges to the existing vertices. For each edge, it is attached to a vertex with probability proportional to the current degree of the vertex. We generate the seed with Erd\H{o}s--R\'{e}nyi random graphs $G(n_{\mathrm{seed}},p)$, that is: $G(n_{\mathrm{seed}},p)$ is a random graph with $n_{\mathrm{seed}}$ nodes such that every two nodes are connected with probability $p$ independently of other edges. For each dimension $n\in\{50, 200, 500, 1000, 2000, 5000\}$, we collect a basket of problems by selecting a density $m\in\{0.005, 0.01, 0.02, 0.04, 0.08\}$, such that $l = mn$. For each set of parameters, we repeat the experiment four times with new graphs and new starting points. The seed is generated by $G(2\lceil mn\rceil, 0.5)$.

\subsubsection{Non-negative PCA}
We follow Montanari and Richard~\cite{montanari2016non} for this presentation of non-negative PCA. Let $v_0 \in \reals^d$ be the spiked signal with $\|v_0\| = 1$. In the so-called spiked model, we are given the matrix $Z = \sqrt{SNR}\  v_0^{}v_0^T+N$, where $N$ is a random symmetric noise matrix and $SNR$ is the signal to noise ratio. For $N$, its off-diagonal entries follow a Gaussian distribution ${\cal{N}}(0, \frac{1}{n})$ and its diagonal entries follow a Gaussian distribution ${\cal{N}}(0, \frac{2}{n})$, i.i.d.\ up to symmetry. In classical PCA, we hope to recover the signal $v_0$ from finding the eigenvector that corresponds to the largest eigenvalue. That is, to solve the following problem:
\begin{equation} \label{classicalpca}
    \begin{aligned}
    & \underset{v\in \mathbb{R}^{d}}{\text{min}}
    & & -v^TZv\\
    & \text{subject to}
   & & \|v\| = 1.
    \end{aligned}
\end{equation}
However, it is well known that in the high dimensional regime, $n = O(d)$, this approach breaks down---the principal eigenvector of $Z$ is asymptotically less indicative of $v_0$; see \cite{johnstone2009consistency}. 

One way of dealing with this problem is by introducing structures such as sparsity on the solution space. In~\cite{montanari2016non}, the authors impose nonnegativity of entries as prior knowledge, and propose to solve PCA restricted to the positive orthant:
\begin{equation} \label{nonnegpca}
    \begin{aligned}
    & \underset{v\in \mathbb{R}^{d}}{\text{min}}
    & & -v^TZv\\
    & \text{subject to}
   & & \|v\| = 1, v\geq 0.
    \end{aligned}
\end{equation}
Since $\|v\| = 1$ defines the unit sphere $\mathbf{S}^{d-1}$ in $\reals^d$, one can write the above problem as
\begin{equation} \label{nonnegpcasdpbm}
    \begin{aligned}
    & \underset{v\in \mathbf{S}^{d-1}}{\text{min}}
    & &  -v^TZv\\
    & \text{subject to}
   & & v\geq 0.
    \end{aligned}
\end{equation}
This falls within the framework of~\eqref{problem:mcp}, with $\calM = \mathbf{S}^{d-1}$ as an embedded Riemannian submanifold of $\reals^d$ and $d$ inequality constraints.\\

\textbf{Input.}
Similar to \cite{khuzani2017stochastic}, we synthetically generate data following the symmetric spiked model as described above. We consider dimensions $d = \{10, 50, 200, 500, 1000, 2000\}$, and let $n = d$. For each dimension, we consider the basket of problems parametrized by $SNR\in \{0.05, 0.1, 0.25, 0.5, 1.0, 2.0\}$ which controls the noise level and $\delta \in \{0.1, 0.3, 0.7, 0.9\}$ which controls sparsity. Furthermore, we let the support $S \subseteq \{1, \ldots, n\}$ of the true principal direction $v_0$ be uniformly random, with cardinality $|S| = \lfloor \delta d\rfloor$, and 
\[
v_{0,i} = \begin{cases}\frac{1}{\sqrt{|S|}} &\textrm{ if } i\in S, \\
            0 & \textrm{ otherwise.}
           \end{cases}
\]
Note that $S$ controls how sparse $v_0$ is. For each set of parameters, the experiment was repeated four times with different random noise $N$. As there are 6 choices for $d$, 4 for $\delta$, 6 for $SNR$, this gives 144 problems in each basket.

\subsubsection{$k$-means via low-rank SDP}
$k$-means is a traditional clustering algorithm in machine learning. Given a set of data points $\{x_i\}_{i=1}^{n}$, $x_i\in \mathbb{R}^l$ for some $l$, and a desired number of clusters $k$, this algorithm aims to partition the data into $k$ subsets $\{S_j\}_{j=1}^k$ such that the following is minimized:
\[
    {\cal{D}}(\{S_j\}_{j=1}^k) = \sum_{j=1}^k\sum_{x_i\in S_j}\|x_i-m_j\|^2,
\]
where $m_j = \frac{1}{|S_j|}\sum_{x_i\in S_j}x_i$ is the center of mass of the points selected by $S_j$. Following~\cite{carson2017manifold}, an equivalent formulation is as follows ($I$ is the identity matrix):
\begin{equation} \label{kmeans}
    \begin{aligned}
    & \underset{Y\in\mathbb{R}^{n\times k}}{\text{min}}
    & & -\mathrm{tr}(Y^TDY)\\
    & \text{subject to}
   & &  Y^TY = I,\quad Y\geq 0, \quad YY^Te = e,
    \end{aligned}
\end{equation}
where $D_{ij} = \|x_i-x_j\|^2$ is the squared Euclidean distance matrix. The constraint $Y^TY = I$ indicates $Y$ has orthonormal columns. The set of such matrices is called the Stiefel manifold. Thus, we can rewrite the problem as:
\begin{equation} \label{kmeansstiefel}
    \begin{aligned}
    & \underset{Y\in \mathrm{Stiefel}(n,k)}{\text{min}}
    & & -\mathrm{tr}(Y^TDY)\\
    & \text{subject to}
   & &  Y\geq 0, \quad YY^Te = e.
    \end{aligned}
\end{equation}
This falls within the framework of~\eqref{problem:mcp} with $\calM$ the Stiefel manifold (as an embedded Riemannian submanifold of $\reals^{n\times k}$), $n$ equality constraints and $nk$ inequality constraints.\\

\textbf{Input.} We take data sets from the UCI Machine Learning repository \cite{Lichman:2013}. For each dataset, we clean the data by removing categorical data and normalizing each feature. This gives $X = [x_1\dots x_N]$, and we get $D$ via $D_{ij} = \|x_i-x_j\|^2$. The specifications of the cleaned data are described in Table~\ref{table:kmeanspec}.

\begin{table}
	\centering
{\begin{tabular}{c c c c}
% centered columns (4 columns)
\hline
\hline                        %inserts double horizontal lines
Dataset name & Number of data  & features & clusters \\
 [0.5ex]
% inserts table 
%heading
\hline
Iris & 150 & 4 & 3  \\
ecoli & 336 & 5 & 8  \\
pima & 768 & 8 & 2  \\
sonar & 208 & 60 & 2 \\
vehicle &846 & 18 & 4 \\
wine & 178 & 13 & 3 \\
cloud & 2048 & 10 & 4 \\
 [1ex]
      % [1ex] adds vertical space
\hline
%inserts single line
\end{tabular}}

%\tbl
\caption{Details of the datasets used in $k$-means}
\label{table:kmeanspec}
\end{table}

\subsection{Methodology} \label{sec:methodology}
We compare the methods discussed above against each other. In addition, as a control to test the hypothesis that exploiting the geometry of the manifold constraint is beneficial, we also compare against Matlab's built-in constrained optimization solver \texttt{fmincon}, which treats the manifold as supplementary equality constraints. We choose \texttt{fmincon} because it is a general purpose solver which combines various algorithms to enhance performance and it has been refined over years of developments, and thus acts as a good benchmark. To summarize, the methods are:
\begin{itemize}
	\item RALM: Riemannian Augmented Lagrangian Method, Section~\ref{sec:ALM};
	\item REPMS($Q^{\textrm{lqh}}$): exact penalty method with smoothing (linear-quadratic and pseudo-Huber), Section~\ref{sec:REPMS};
	\item REPMS($Q^{\textrm{lse}}$): exact penalty method with smoothing (log-sum-exp), Section~\ref{sec:REPMS}; 
	\item REPMSD: exact penalty method via nonsmooth optimization, Section~\ref{sec:REPMSD};
	\item Matlab's \texttt{fmincon}: does not exploit manifold structure.
\end{itemize}

We supply \texttt{fmincon} with the gradients of both the objective function and the constraints. We use the default settings: minimum step size is $10^{-10}$ and relative constraint violation tolerance is $10^{-6}$.\footnote{When the step size is of order $10^{-10}$, we believe that the current point is close to convergence. We also conducted experiments with minimum step size $10^{-7}$ for minimum balanced cut and non-negative PCA, and the performance profiles are visually similar to those displayed here.} Furthermore, we disable the stopping criterion based on a maximum number of iterations or queries in order to allow \texttt{fmincon} to converge to good solutions. For all of our methods, we define $d_{\mathrm{min}} = 10^{-10}, \epsilon_0 = 10^{-3}, \epsilon_{\min} = 10^{-6}, \theta_\epsilon = (\epsilon_{\min}/\epsilon_0)^\frac{1}{30}, \rho_0 = 1, \theta_{\rho} = 3.3$. Specifically, for RALM, $\theta_{\sigma} = 0.8$; for exact penalty methods, $\tau = 10^{-6}$, $u_0 = 10^{-1}, u_{\min} = 10^{-6}, \theta_u= (u_{\min}/u_0)^\frac{1}{30}$. As subsolver for RALM and the smoothing methods REPMS, we use LRBFGS: a Riemannian, limited-memory BFGS~\cite{huang2015broyden} as implemented in Manopt~\cite{boumal2014manopt}. For REPMSD, we use a minimum-norm tangent vector in the extended subgradient, then we use a type of LRBFGS inverse Hessian approximation for Hessian updates to choose the update direction~\cite{hosseini2016line}. For these limited-memory subsolvers, we let the memory be 30, the maximum number of iterations be 200, and the minimum step size be $10^{-10}$. For each experiment, all solvers have the same starting point randomly chosen on the manifold. An experiment is also terminated if it runs over one hour. We register the last produced iterate. Maximum constraint violation (Maxvio), cost function value and computation time are recorded for each solver, where Maxvio is defined as:
\begin{align*}
	\mathrm{Maxvio = } \max\left(\{|h_j(x)|\big|j\in {\cal{E}}\}\cup\{g_i(x)|i\in {\cal{I}}\}\cup\{0\}\right).
\end{align*}

For minimum balanced cut and non-negative PCA, we present performance profiles to compare computation time across problem instances for the various solvers. Performance profiles were popularized in optimization by Dolan and Mor\'e~\cite{dolan2002benchmarking}. Quoting from that reference, performance profiles show ``(cumulative) distribution functions for a performance metric as a tool for benchmarking and comparing optimization software.'' In particular, we show ``the ratio of the computation time of each solver versus the best time of all of the solvers as the performance metric.'' The profiles of REPMSD are only shown in lower dimensions to reduce computation time. See Figure~\ref{bmincut} for further details.

We consider that a point is feasible if Maxvio is smaller than $5\cdot 10^{-4}$. By denoting $f_{\min}$ as the minimum cost among all feasible solutions returned by solvers, we consider a solution to be best-in-class if it is feasible $and$ the cost $f$ satisfies $\left|f/f_{\min}-1\right| < 2\%$. In view of the objective functions of these two problems, a relative error tolerance is reasonable. We consider that a solver solves the problem if the returned point is best-in-class.

For $k$-means, we give a table of experimental results for Maxvio, cost and time across all datasets.

\subsection{Results and analysis}
\subsubsection{Minimum balanced cut for graph bisection}
We display in Figure~\ref{bmincut} the performance profiles for various dimensions, corresponding to the number of vertices in the graph. We note that apart from REPMSD, all methods perform well in the low dimensional case. In the high dimensional case, \texttt{fmincon} fails: in all test cases it converges to points far from feasible despite having no iteration limit; exact penalty methods and RALM are still able to give satisfactory results in some cases. 

\begin{figure}[h!]
	\centering
	\includegraphics[width=1.0\linewidth]{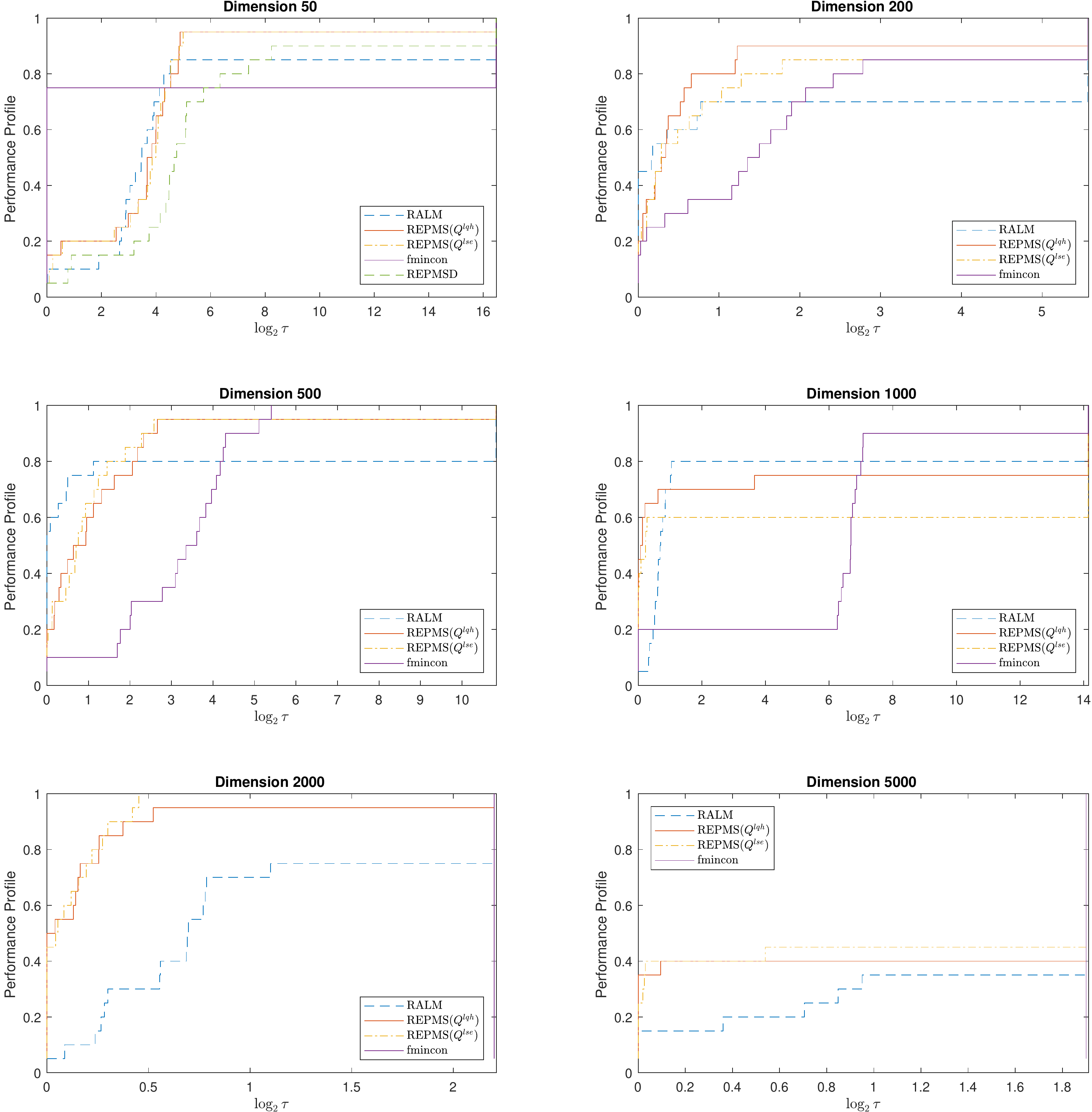}
	\caption{Performance profiles of computation time for different dimensions of the minimum balanced cut problem (with maximum constraint violation and cost accounted for---see Section~\ref{sec:methodology}). For each dimension, all solvers are run on a basket of problems. For each problem that could be solved, one solver obtained the fastest computation time. The curve for a solver passes through point $(\log_2\tau, q)$ if it solved a fraction $q$ of the problems within a factor $\tau$ of the fastest solver (which may change for each problem). Thus, upper-left is best. In particular, for $\tau = 1$ ($\log_2\tau = 0$), the curve of solver $A$ passes at level $q$ if solver $A$ was the fastest on a fraction $q$ of the problems. These may not sum to 100\% since some problems were not solved by any solver (and there could be ties, though that is unlikely).}
	\label{bmincut}
\end{figure}

%\begin{figure}
%\hspace*{-1.8cm}
% \includegraphics[scale=0.5]{bcnew}
% \vspace*{-1.5cm}
% \caption{Performance Profile of computational time for different dimensions of the minimum balanced cut problem (with maximum constraint violation and cost accounted for). Upper-left corner is better.}
% \label{bmincut}
%\end{figure}

\subsubsection{Non-negative PCA}
Table~\ref{nnpca} displays the performance profiles for non-negative PCA for various dimensions, corresponding to the length of the sought vector. RALM and REPMS($Q^{\textrm{lqh}}$) outperform \texttt{fmincon} as dimension increases. REPMS($Q^{\textrm{lse}}$) is rather slow. Notice that in this problem there are lots of inequality constraints involved. For REPMS($Q^{\textrm{lqh}}$), when we compute the gradient of the objective function, gradients of constraints that are not violated at the current point are not computed--- they are 0. However, they are computed in REPMS($Q^{\textrm{lse}}$), which slows down the algorithm. This observation suggests that, for problems with a large number of inequality constraints, REPMS($Q^{\textrm{lse}}$) may not be the best choice.

\begin{figure}[h!]
	\centering
	\includegraphics[width=1\linewidth]{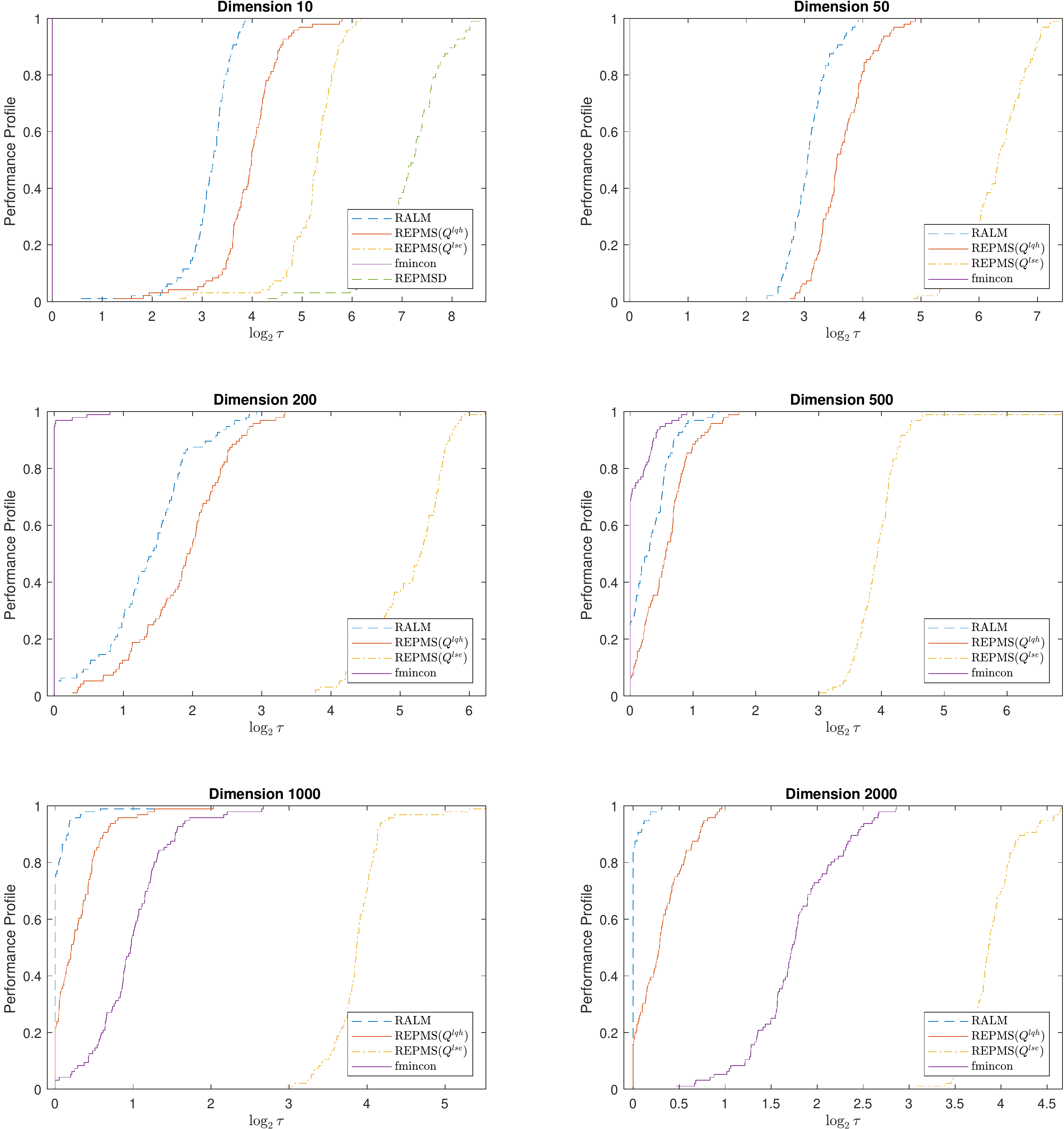}
	 \caption{Performance profile of computational time for different dimensions of non-negative PCA (with maximum constraint violation and cost accounted for---see Section~\ref{sec:methodology}). See caption of Figure~\ref{bmincut} for details on performance profiles.}
	 \label{nnpca}
\end{figure}

%\begin{figure}[t]
%\hspace*{-1.8cm}
% \includegraphics[scale = 0.5]{pca.eps}
% \vspace*{-1.5cm}
% \caption{Performance Profile of computational time for different dimensions of the non-negative PCA (with maximum constraint violation and cost accounted for). Upper-left corner is better.}
% \label{nnpca}
%\end{figure}

\subsubsection{$k$-means}
Table~\ref{kmeansresult} gives the results for $k$-means clustering. As the dimension gets larger, \texttt{fmincon} performs worse than RALM and REPMS in both Maxvio and Cost. Among all these methods, REPMS($Q^{\textrm{lqh}}$) and RALM are the better methods since they have relatively shorter solving time and lower Maxvio. In addition, REPMS($Q^{\textrm{lqh}}$) achieves the lowest cost for high dimensions.

\begin{table}
	%\tbl
	\centering
	{\begin{tabular}{lllllll}
			&          & RALM                                      & REPMS$(Q^{\textrm{lqh}})$                         & REPMS$(Q^{\textrm{lse}})$                         & fmincon                                  & REPMSD                                   \\ \hline
			Iris    & Maxvio   & $1.92 \cdot 10^{-5}$                                 & $6.49 \cdot 10^{-6}$                                 & $4.26 \cdot 10^{-2}$                                 & $4.44 \cdot 10^{-5}$                                 & {$\mathbf{2.13 \cdot 10^{-7}}$} \\
			& Cost     & $3.31 \cdot 10^{-1}$                                 & {$\mathbf{4.43 \cdot 10^{-2}}$} & $8.20 \cdot 10^{+0}$                                 & $1.14 \cdot 10^{+1}$                                 & $2.80 \cdot 10^{-1}$                                 \\
			& Time (s) & $4.72 \cdot 10^{+2}$                                 & $4.89 \cdot 10^{+2}$                                 & {$\mathbf{1.70 \cdot 10^{+2}}$} & $3.13 \cdot 10^{+2}$                                 & $3.86 \cdot 10^{+3}$                                 \\ \hline
			Wine    & Maxvio   & $7.25 \cdot 10^{-5}$                                 & $6.62 \cdot 10^{-4}$                                 & $4.81 \cdot 10^{-2}$                                 & {$\mathbf{1.75 \cdot 10^{-5}}$}                               & -                                        \\
			& Cost     & $1.89 \cdot 10^{+0}$                                 & {$\mathbf{1.68 \cdot 10^{-1}}$}                    & $8.43 \cdot 10^{+0}$                                 & $1.19 \cdot 10^{+1}$                                 & -                                        \\
			& Time (s) & $3.45 \cdot 10^{+2}$                                 & $2.87 \cdot 10^{+2}$                                 & $1.85 \cdot 10^{+2}$                                 & {$\mathbf{1.59 \cdot 10^{+2}}$} & -                                        \\ \hline
			Sonar   & Maxvio   & $3.02 \cdot 10^{-5}$                                 & $1.42 \cdot 10^{-5}$                                 & $5.48 \cdot 10^{-2}$                                 & {$\mathbf{6.65 \cdot 10^{-7}}$} & -                                        \\
			& Cost     & {$\mathbf{8.99 \cdot 10^{-1}}$} & $9.50 \cdot 10^{+0}$                                 & $1.50 \cdot 10^{+1}$                                 & $1.80 \cdot 10^{+1}$                                 & -                                        \\
			& Time (s) & $5.28 \cdot 10^{+2}$                                 & $3.48 \cdot 10^{+2}$                                 & {$\mathbf{2.55 \cdot 10^{+2}}$} & $6.70 \cdot 10^{+2}$                                 & -                                        \\ \hline
			Ecoli   & Maxvio   & {$\mathbf{8.33 \cdot 10^{-4}}$}                     & $2.65 \cdot 10^{-3}$                                 & $6.73 \cdot 10^{-2}$                                 & $9.67 \cdot 10^{-1}$                                 & -                                        \\
			& Cost     & $4.08 \cdot 10^{+1}$                                 &{$\mathbf{1.24 \cdot 10^{-1}}$}                       & $1.87 \cdot 10^{+1}$                                 & $6.45 \cdot 10^{+0}$                                 & -                                        \\
			& Time (s) & $1.75 \cdot 10^{+3}$                                 & $1.44 \cdot 10^{+3}$                                 & {$\mathbf{1.01 \cdot 10^{+3}}$}                    & $3.62 \cdot 10^{+3}$                                 & -                                        \\ \hline
			Pima    & Maxvio   & {$\mathbf{2.10 \cdot 10^{-10}}$} & $1.82 \cdot 10^{-3}$                                 & $9.10 \cdot 10^{-2}$                                 & $2.34 \cdot 10^{-5}$                                 & -                                        \\
			& Cost     & $2.60 \cdot 10^{-2}$                                 & {$\mathbf{9.48 \cdot 10^{-3}}$} & $1.56 \cdot 10^{+0}$                                 & $2.13 \cdot 10^{+1}$                                 & -                                        \\
			& Time (s) & $1.37 \cdot 10^{+3}$                                 & $1.25 \cdot 10^{+3}$                                 & {$\mathbf{5.79 \cdot 10^{+2}}$} & $3.62 \cdot 10^{+3}$                                 & -                                        \\ \hline
			Vehicle & Maxvio   & $8.87 \cdot 10^{-3}$                                 & {$\mathbf{4.35 \cdot 10^{-4}}$} & $9.17 \cdot 10^{-2}$                                 & $2.71 \cdot 10^{+2}$                                 & -                                        \\
			& Cost     & $3.18 \cdot 10^{+0}$                                 & {$\mathbf{3.77 \cdot 10^{-2}}$} & $2.23 \cdot 10^{+1}$                                 & $6.71 \cdot 10^{+2}$                                 & -                                        \\
			& Time (s) & $2.88 \cdot 10^{+3}$                                 & $2.70 \cdot 10^{+3}$                                 & {$\mathbf{2.61 \cdot 10^{+3}}$} & $3.63 \cdot 10^{+3}$                                 & -                                        \\ \hline
			Cloud   & Maxvio   & $1.81 \cdot 10^{-2}$                                 & {$\mathbf{1.83 \cdot 10^{-3}}$} & $6.79 \cdot 10^{-1}$                                 & $3.70 \cdot 10^{+3}$                                 & -                                        \\
			& Cost     & $1.36 \cdot 10^{-1}$                                 & {$\mathbf{5.49 \cdot 10^{-4}}$} & $1.97 \cdot 10^{+0}$                                 & $2.20 \cdot 10^{+3}$                                 & -                                        \\
			& Time (s) & $4.08 \cdot 10^{+3}$                                 & {$\mathbf{3.65 \cdot 10^{+3}}$} & $3.78 \cdot 10^{+3}$                                 & $3.84 \cdot 10^{+3}$                                 & -                                        \\ \hline
	\end{tabular}}
	\caption{Maximum constraint violation, cost function value, and computation time results of different algorithms for $k$-means. For dataset `cloud' (which does not come with a specified number of clusters), we set the number of clusters $k$ to 4. For all other datasets, we used the number of clusters recommended by the dataset documentation.}
	\label{kmeansresult}
\end{table}

\section{Conclusion}
In this paper, augmented Lagrangian methods and exact penalty methods are extended to Riemannian manifolds, along with some essential convergence results. These are compared on three applications against a baseline, which is to treat the manifold as a set of general constraints. The three applications are minimum balanced cut, non-negative PCA and $k$-means. The conclusions from the numerical experiments are that, in high dimension, it seems to be beneficial to exploit the manifold structure; as to which Riemannian algorithm is better, the results vary depending on the application.

 We consider this work to be a first systematic step towards understanding the empirical behavior of straightforward extensions of various techniques of constrained optimization from the Euclidean to the Riemannian case. One direct advantage of exploiting the Riemannian structure is that this part of the constraints is satisfied up to numerical accuracy at each iteration. While our numerical experiments indicate that some of these methods perform satisfactorily as compared to a classical algorithm which does not exploit Riemannian structure (especially in high dimension), the gains are moderate. For future research, it is of interest to pursue refined versions of some of the algorithms we studied here, possibly inspired by existing refinements in the Euclidean case but also by a more in depth study of the geometry of the problem class. Furthermore, it is interesting to pursue the study of the convergence properties of these algorithms---and the effects of the underlying Riemannian geometry---beyond the essential results covered here, in particular because the Riemannian approach extends to general abstract manifolds, including some which may not be efficiently embedded in Euclidean space through practical equality constraints.

\section*{Acknowledgments} 
We thank an anonymous reviewer for detailed and helpful comments on the first version of this paper. NB is partially supported by NSF grant DMS-1719558.
%CL was supported by a summer research grant from the mathematics department at Princeton University.

\bibliographystyle{abbrv}
\bibliography{biblio}

%\appendices
\appendix

\section{Proof of Proposition \ref{prop:almfirst}} \label{appendixalmfirst}

We first introduce two supporting lemmas. The first lemma is a well-known fact for which we provide a proof for completeness.\footnote{The proof follows an argument laid out by John M.\ Lee: \url{https://math.stackexchange.com/questions/2307289/parallel-transport-along-radial-geodesics-yields-a-smooth-vector-field}.}
\begin{lemma} \label{lem:smothparalleltransport}
	Let $p$ be a point on a Riemannian manifold ${\cal{M}}$, and let $v$ be a tangent vector at $p$. Let ${\cal{U}}$ be a normal neighborhood of $p$, that is, the exponential map maps a neighbourhood of the origin of $\T_p{\cal{M}}$ diffeomorphically to ${\cal{U}}$. Define the following vector field on ${\cal{U}}$:
	\begin{align*}
		\forall q \in {\cal{U}}, \qquad V(q) & = {\cal{P}}_{p \to q} v,
	\end{align*}
	where parallel transport is done along the (unique) minimizing geodesic from $p$ to $q$. Then, $V$ is a smooth vector field on ${\cal{U}}$.
\end{lemma}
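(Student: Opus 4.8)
The plan is to realize $V$ as a solution of a first-order linear ODE with smooth coefficients that depend smoothly on parameters, and then invoke smooth dependence of ODE solutions on initial conditions and parameters. First I would set up coordinates: let $E \subseteq \T_p\calM$ be the open neighborhood of $0_p$ that $\Exp_p$ maps diffeomorphically onto $\mathcal{U}$, so that every $q \in \mathcal{U}$ is written uniquely as $q = \Exp_p(w)$ for $w = w(q) \in E$, with $w$ depending smoothly on $q$. For each such $w$, the curve $\gamma_w(t) = \Exp_p(tw)$, $t \in [0,1]$, is the unique minimizing geodesic from $p$ to $q$, and $V(q) = \mathcal{P}_{p \to q} v$ is the value at $t = 1$ of the parallel field along $\gamma_w$ with initial value $v$ at $t = 0$.

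Next I would organize this into a single smooth system on $E \times [0,1]$. Pick a smooth local frame $(e_1,\dots,e_d)$ for $\T\calM$ on $\mathcal{U}$ (shrinking $\mathcal{U}$ if necessary, which is harmless since smoothness is local), and let $\Gamma^k_{ij}$ be the associated Christoffel symbols, which are smooth functions on $\mathcal{U}$. Writing the parallel field along $\gamma_w$ in components $Z(t) = \sum_k Z^k(t) e_k(\gamma_w(t))$ and the geodesic velocity $\dot\gamma_w(t)$ in the same frame, the parallel transport equation becomes a linear ODE
\[
	\frac{d}{dt} Z^k(t) = -\sum_{i,j} \Gamma^k_{ij}(\gamma_w(t))\, \dot\gamma_w^{\,i}(t)\, Z^j(t),
\]
with initial condition $Z^k(0)$ equal to the components of $v$. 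Here the map $(w,t) \mapsto \gamma_w(t)$ is smooth (it is $\Exp_p$ composed with scalar multiplication), hence so is $(w,t) \mapsto \dot\gamma_w(t)$, and therefore the coefficients of this linear ODE depend smoothly jointly on $(w,t)$. By the standard theorem on smooth dependence of solutions of ODEs on parameters (applied with the parameter $w$ ranging over $E$), the solution $Z(t) = Z(t;w)$ is smooth in $(t,w)$ on $[0,1] \times E$. Evaluating at $t = 1$, the map $w \mapsto Z(1;w)$ is smooth; composing with the smooth map $q \mapsto w(q)$ and with the smooth frame $e_k$ gives that $q \mapsto V(q) = \sum_k Z^k(1;w(q))\, e_k(q)$ is a smooth vector field on $\mathcal{U}$.

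The one point requiring a little care—and the main (mild) obstacle—is the joint smoothness in $(w,t)$ up to and including the endpoint $t = 1$ and uniformly as $w$ ranges over a neighborhood: this follows because $E$ can be chosen so that $\overline{\{(w,t) : w \in E', t \in [0,1]\}}$ sits inside the domain where everything is defined for a slightly smaller $E' \ni 0$, and the ODE is linear so there is no blow-up on $[0,1]$; smooth parameter-dependence then applies on the compact interval. I would also remark that shrinking $\mathcal{U}$ to arrange a global smooth frame does not weaken the conclusion, since being a smooth vector field is a local property and the definition of $V$ is unchanged on the smaller set. This completes the argument.
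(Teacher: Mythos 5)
Your proposal is correct and follows essentially the same route as the paper: write parallel transport along the radial geodesics $t \mapsto \Exp_p(tw)$ as a linear ODE in coordinates with coefficients depending smoothly on $(w,t)$, and conclude by smooth dependence of solutions on the parameter $w$. The only cosmetic difference is that you invoke the parameter-dependence theorem directly, whereas the paper derives it by adjoining $w$ as extra state variables with zero derivative — the standard augmentation trick that proves that very theorem.
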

\begin{proof}
	Parallel transport from $p$ is along geodesics passing through $p$. To facilitate their study, set up normal coordinates $\phi\colon U \subset \mathbb{R}^d \rightarrow{\cal{U}}$ around $p$ (in particular, $\phi(0) = p$), where $d$ is the dimension of the manifold. For a point $\phi(x_1,\dots,x_d)$, by definition of normal coordinates, the radial geodesic from $p$ is $c(t) = \phi(tx_1,\dots, tx_d)$. Our vector field of interest is defined by $V(p) = v$ and the fact that it is parallel along every radial geodesic $c$ as described.
	
	For a choice of point $\phi(x)$ and corresponding geodesic, let
	\begin{align*}
	V(c(t)) = \sum_{k=1}^d v_k(t) \partial_k(c(t))
	\end{align*}
	for some coordinate functions $v_1, \ldots, v_d$, where $\partial_k$ is the $k$th coordinate vector field. These coordinate functions satisfy the following ordinary differential equations (ODE)~\cite[Prop.~2.6, eq.~(2)]{carmo1992riemannian}:
	% do Carmo: for a radial geodesic (in normal coordinates) as we have in the Appendix lemma with Changshuo, their functions are x_i(t) = x_i t for some fixed x_i, so the derivative of x_i(t) is simply x_i in the ODEs below. So it all checks out: it's the same ODEs as in Lee's response on https://math.stackexchange.com/questions/2307289/parallel-transport-along-radial-geodesics-yields-a-smooth-vector-field?answertab=active#tab-top
	\begin{align*}
	0 & = \frac{dv_k(t)}{dt} + \sum_{i,j}\Gamma^k_{ij}(tx_1,\dots, tx_d) v_j(t) x_i, & k = 1,\dots, d,
	\end{align*}
	where $\Gamma$ denotes Christoffel symbols.
	Expand $V(p)$ into the coordinate vector fields: $v = \sum_{k=1}^d w_k \partial_k(p)$. Then, the initial conditions are $v_k(0) = w_k$ for each $k$. Because these ODEs are smooth, solutions $v_k(t; w)$ exist, and they are smooth in both $t$ and the initial conditions $w$~\cite[Thm.~D.6]{lee2012smoothmanifolds}. But this is not enough for our purpose.
	
	Crucially, we wish to show smoothness also in the choice of $x \in U$. To this end, following a classical trick, we extend the set of equations to let $x$ be part of the variables, as follows:
	\[
	\begin{cases}
	0 = \frac{dv_k(t)}{dt} + \sum_{i,j}\Gamma^k_{ij}(tu_1(t),\ldots, tu_d(t)) v_j(t) u_i(t), & k=1,\ldots, d,\\
	0 = \frac{du_k(t)}{dt}, & k=1, \ldots, d.
	\end{cases}
	\]
	The extended initial conditions are:
	\begin{align*}
	v_k(0) & = w_k, & u_k(0) & = x_k, & k & = 1, \ldots, d.
	\end{align*}
	Clearly, the functions $u_k(t)$ are constant: $u_k(t) = x_k$.
	These ODEs are still smooth, hence solutions $v_k(t; w, x)$ still exist and are identical to those of the previous set of ODEs, except we now see they are also smooth in the choice of $x$. Specifically, for every $x \in U$,
	\begin{align*}
	V(\phi(x)) & = V(c(1)) = \sum_{k=1}^{d} v_k(1; w, x) \partial_k(\phi(x)),
	\end{align*}
	and each $v_k(1; w, x)$ depends smoothly on $x$. Hence, $V$ is smooth on ${\cal{U}} = \phi(U)$.
	%    
	%    As it is a smooth ODE, the solution $v(t, w, x)$, $u(t,w,x)$ depends smoothly on the initial conditions. \TODO{?--Within the solution} the vector field \TODO{Sum to $k$ or to $n$ ?}
	%    \[
	%        V(x) = \sum_{j=1}^k v_j(1, w, x) \partial_j
	%    \]
	%    is the smooth vector field where the vector on each point is the parallel transport of $\operatorname{grad} f(p)$ to that point.
	%
	%Since subtracting this smooth vector field $V$ from $\operatorname{grad} f$ yields a smooth vector field, and this resulting field has norm $0$ at $p$, eq.~\eqref{eqleminv} is proved.
\end{proof}
\begin{lemma} \label{lemma:ode}
	Given a Riemannian manifold ${\cal{M}}$, a function $f \colon \cal{M} \to \mathbb{R}$ (continuously differentiable), and a point $p\in{\cal{M}}$, if $p_0, p_1, p_2, \ldots$ is a sequence of points in a normal neighborhood of $p$ and convergent to $p$, then the following holds:
	\[
    	\lim_{k\rightarrow\infty} \left\|{\cal{P}}_{p_k\rightarrow p}\operatorname{grad} f(p_k) - \operatorname{grad} f(p)\right\|_p = 0,
	\]
	where ${\cal{P}}_{p_k\rightarrow p}$ is the parallel transport from $\T_{p_k}\calM$ to $\T_p\calM$ along the minimizing geodesic.
	%(If necessary, elements of the sequence such that ${\cal{P}}_{p_k\rightarrow p}$ is not well defined can be discarded: since $p_k$ converges to $p$, this only affects a finite number of elements.)
\end{lemma}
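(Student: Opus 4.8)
\textbf{Proof plan for Lemma \ref{lemma:ode}.}

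The plan is to reduce the statement to a continuity argument about a smooth vector field. Fix a normal neighborhood ${\cal{U}}$ of $p$ and an orthonormal basis $e_1, \ldots, e_d$ of $\T_p\calM$. For each $i$, Lemma \ref{lem:smothparalleltransport} provides a smooth vector field $E_i$ on ${\cal{U}}$ defined by $E_i(q) = {\cal{P}}_{p \to q} e_i$; since parallel transport is an isometry, $E_1(q), \ldots, E_d(q)$ is an orthonormal basis of $\T_q\calM$ at every $q \in {\cal{U}}$. In this moving frame, write $\operatorname{grad} f(q) = \sum_{i=1}^d a_i(q) E_i(q)$, where $a_i(q) = \langle \operatorname{grad} f(q), E_i(q)\rangle_q = \operatorname{D}\!f(q)[E_i(q)]$. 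Because $f$ is continuously differentiable and each $E_i$ is smooth (hence continuous), each coefficient function $a_i\colon {\cal{U}} \to \reals$ is continuous on ${\cal{U}}$.

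The key observation is that parallel transport behaves well with respect to this frame: transporting $\operatorname{grad} f(p_k)$ back to $p$ along the minimizing geodesic sends $E_i(p_k)$ to $E_i(p) = e_i$, because $E_i$ is by construction parallel along every radial geodesic emanating from $p$ (that is exactly how it was built in Lemma \ref{lem:smothparalleltransport}), and the minimizing geodesic from $p_k$ to $p$ is the reversal of such a radial geodesic. Consequently
\[
  {\cal{P}}_{p_k \to p} \operatorname{grad} f(p_k) = \sum_{i=1}^d a_i(p_k)\, e_i.
\]
Therefore
\[
  \left\| {\cal{P}}_{p_k \to p} \operatorname{grad} f(p_k) - \operatorname{grad} f(p) \right\|_p
  = \left\| \sum_{i=1}^d \big(a_i(p_k) - a_i(p)\big) e_i \right\|_p
  = \left( \sum_{i=1}^d \big(a_i(p_k) - a_i(p)\big)^2 \right)^{1/2}.
\]
Since $p_k \to p$ and each $a_i$ is continuous at $p$, each term $a_i(p_k) - a_i(p) \to 0$, and the finite sum tends to $0$. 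This gives the claim.

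The main obstacle here is not the limit itself but making rigorous the compatibility statement that parallel transport identifies $E_i(p_k)$ with $e_i$; everything hinges on the fact that $E_i$ is the \emph{radially parallel} extension of $e_i$, and that the minimizing geodesic joining $p$ to $p_k$ (unique since $p_k$ lies in a normal neighborhood) is precisely the radial geodesic used in the construction of $E_i$, traversed in the opposite direction. Once that is pinned down, the argument is just continuity of $\operatorname{D}\!f$ composed with the continuous frame field. One minor point to be careful about: one should pass to a possibly smaller normal neighborhood so that all $p_k$ and $p$ lie inside it and the relevant exponential maps are diffeomorphisms; since $p_k \to p$, all but finitely many $p_k$ lie in any such neighborhood, and the finitely many exceptions do not affect the limit.
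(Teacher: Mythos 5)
Your proof is correct and follows essentially the same route as the paper's: both rest on Lemma~\ref{lem:smothparalleltransport} to produce a smooth radially-parallel object and then reduce the claim to continuity at $p$. The only cosmetic difference is that the paper transports $\operatorname{grad} f(p)$ forward to $p_k$ and compares two continuous vector fields there, whereas you transport an orthonormal frame and read the statement off the continuity of the scalar coefficients; both hinge on the same fact that parallel transport along the reversed minimizing geodesic inverts transport along it.
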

\begin{proof}[Proof of Lemma~\ref{lemma:ode}]
    As parallel transport is an isometry, it is equivalent to show
    \begin{equation}\label{eqleminv}
    \lim_{k\rightarrow\infty} \left\|\operatorname{grad} f(p_k) -  {\cal{P}}_{p\rightarrow p_k} \operatorname{grad} f(p)\right\|_{p_k} = 0.
    \end{equation}
    Under our assumptions, $\grad f$ is a continuous vector field. 
    Furthermore, by Lemma~\ref{lem:smothparalleltransport}, in a normal neighborhood of $p$, the vector field
    $V(y) = {\cal{P}}_{p\rightarrow y} \operatorname{grad} f(p)$ is a continuous vector field as well. Hence, $\grad f - V$ is a continuous vector field around $p$; since $\grad f(p) - V(p) = 0$, the result is proved: $\lim_{k \to \infty} \grad f(p_k) - V(p_k) = \grad f(p) - V(p) = 0$.
\end{proof}

\begin{proof}[Proof of Proposition \ref{prop:almfirst}]
Restrict to a convergent subsequence if needed, so that $\lim_{k\rightarrow \infty}x_k = \overline{x}$. Further exclude a (finite) number of $x_k$'s so that all the remaining points are in a neighborhood of $\overline{x}$ where the exponential map is a diffeomorphism. In this proof, let ${\cal{A}}$ denote ${\cal{A}}(\overline{x})$ for ease of notation: this is the set of active constraints at the limit point. Then, there exist constants $c, k_1$ such that $g_i(x_k) < c < 0$ for all $k>k_1$ with $i \in {\cal{I}} \setminus {\cal{A}}$.

When $\{\rho_k\}$ is unbounded, since multipliers are bounded, there exists $k_2>k_1$ such that $\lambda_i^k+\rho_kg_i(x_{k+1}) < 0$ for all $k\geq k_2$, $i\in{\cal{I}}\setminus {\cal{A}}$. Thus, by definition, $\lambda_i^{k+1} = 0$ for all $k\geq k_2$, $i\in{\cal{I}}\setminus{\cal{A}}$.

When instead $\{\rho_k\}$ is bounded, $\lim_{k\rightarrow \infty} |\sigma_i^k| = 0$. Thus for $i\in {\cal{I}}\setminus {\cal{A}}$, in view of $g_i(x_k) < c<0$ for all $k>k_1$, we have $\lim_{k\rightarrow\infty} \frac{-\lambda_i^k}{\rho_k}= 0$. Then, for large enough $k$, $\lambda_i^k +\rho_kg_i(x_{k+1})<0$ and thus there exists $k_2>k_1$ such that $\lambda_i^k = 0$ for all $k\geq k_2$. So in either case, we can find such $k_2$.

As LICQ is satisfied at $\overline{x}$, by continuity of the gradients of $\{g_i\}$ and $\{h_j\}$, the tangent vectors $\{\operatorname{grad} h_j(x_k)\}_{j\in{\cal{E}}}\cup\{\operatorname{grad} g_i(x_k)\}_{i\in{\cal{I}}\cap{\cal{A}}}$ are linearly independent for all $k>k_3>k_2$ for some $k_3$. Define
\begin{align*}
	\overline{\lambda}_i^k & = \max\left\{0, \lambda_i^{k-1} + \rho_{k-1} g_i(x_{k})\right\}, & \textrm{ and } & & \overline{\gamma}_j^k & = \gamma_j^{k-1}+\rho_{k-1} h_j(x_{k})
\end{align*}
as the unclipped update. Define $S_k := \max\{\|\overline{\gamma}^k\|_\infty,\|\overline{\lambda}^k\|_\infty\}$. We are going to discuss separately for situations when $S_k$ is bounded and when it is unbounded. If it is bounded, then denote a limit point of $\overline{\lambda}^k, \overline{\gamma}^k$ as $\overline{\lambda}$ and $\overline{\gamma}$. Let
\[
    v = \operatorname{grad} f(\overline{x}) + \sum_{j\in{\cal{E}}} \overline{\gamma}_j\operatorname{grad} h_j(\overline{x}) + \sum_{i\in{\cal{I}}\cap {\cal{A}}}\overline{\lambda}_i \operatorname{grad} g_i(\overline{x}).
\]
In order to prove that $v$ is zero, we compare it to a similar vector defined at $x_k$, for all large $k$, and consider the limit $k \to \infty$. Unlike the Euclidean case in the proof in~\cite{andreani2007augmented}, we cannot directly compare tangent vectors in the tangent spaces at $x_k$ and $\overline{x}$: we use parallel transport to bring all tangent vectors to the tangent space at $\overline{x}$: 
\begin{eqnarray*}
\|v\| & \leq &\left\|\operatorname{grad} f(\overline{x}) - {\cal{P}}_{x_k\rightarrow \overline{x}} \operatorname{grad} f(x_k) + \sum_{j\in{\cal{E}}} \overline{\gamma}_j\left(\operatorname{grad} h_j(\overline{x})-{\cal{P}}_{x_k\rightarrow \overline{x}} \operatorname{grad} h_j(x_k)\right) \right.\\
&& + \left.\sum_{i\in{\cal{I}}\cap {\cal{A}}}\overline{\lambda}_i \left(\operatorname{grad} g_i(\overline{x}) - {\cal{P}}_{x_k\rightarrow \overline{x}} \operatorname{grad} g_i(x_k)\right)\right\|\\
&& + \left\|{\cal{P}}_{x_k\rightarrow \overline{x}} \operatorname{grad} f(x_k) + \sum_{j\in{\cal{E}}} \overline{\gamma}_j{\cal{P}}_{x_k\rightarrow \overline{x}} \operatorname{grad} h_j(x_k) + \sum_{i\in{\cal{I}}\cap {\cal{A}}}\overline{\lambda}_i  {\cal{P}}_{x_k\rightarrow \overline{x}} \operatorname{grad} g_i(x_k)\right\|.
\end{eqnarray*}
By Lemma~\ref{lemma:ode}, the first term vanishes in the limit $k \to \infty$ since $x_k \to \overline{x}$. We can understand the second term using isometry of parallel transport and linearity:
\begin{eqnarray*}
&&\left\|{\cal{P}}_{x_k\rightarrow \overline{x}} \operatorname{grad} f(x_k) + \sum_{j\in{\cal{E}}} \overline{\gamma}_j{\cal{P}}_{x_k\rightarrow \overline{x}} \operatorname{grad} h_j(x_k) + \sum_{i\in{\cal{I}}\cap {\cal{A}}}\overline{\lambda}_i  {\cal{P}}_{x_k\rightarrow \overline{x}} \operatorname{grad} g_i(x_k)\right\|_{\overline{x}}\\
&=& \left\|\operatorname{grad} f(x_k) + \sum_{j\in{\cal{E}}} \overline{\gamma}_j\operatorname{grad} h_j(x_k) + \sum_{i\in{\cal{I}}\cap {\cal{A}}}\overline{\lambda}_i \operatorname{grad} g_i(x_k)\right\|_{x_k}\\
&\leq & \left\|\sum_{j\in{\cal{E}}} (\overline{\gamma}_j - \overline{\gamma}^k_j)\operatorname{grad} h_j(x_k) + \sum_{i\in{\cal{I}}\cap {\cal{A}}}(\overline{\lambda}_i-\overline{\lambda}_i^k) \operatorname{grad} g_i(x_k)\right\|_{x_k} \\
&&+ \left\|\operatorname{grad} f(x_k) + \sum_{j\in{\cal{E}}} \overline{\gamma}_j^k\operatorname{grad} h_j(x_k) + \sum_{i\in{\cal{I}}}\overline{\lambda}_i^k \operatorname{grad} g_i(x_k)\right\|_{x_k} + \left\|\sum_{i\in{\cal{I}}\setminus{\cal{A}}} \overline{\lambda}_i^k \operatorname{grad} g_i(x_k)\right\|_{x_k}.
\end{eqnarray*}
Here, the second term vanishes in the limit because it is upper bounded by $\epsilon_{k}$ (by assumption) and we let $\lim_{k\rightarrow\infty} \epsilon_k = 0$; the last term vanishes in the limit because of the discussion in the second paragraph; and the first term attains arbitrarily small values for large $k$ as norms of gradients are bounded in a neighbourhood of $\overline{x}$ and by definition of $\overline{\lambda}$ and $\overline{\gamma}$. Since $v$ is independent of $k$, we conclude that $\|v\| = 0$. Therefore, $\overline{x}$ satisfies KKT conditions.

On the other hand, if $\{S_k\}$ is unbounded, then for $k\geq k_3$, we have
\[
    \left\|\frac{1}{S_k}\operatorname{grad} f(\overline{x}) +  \sum_{j\in{\cal{E}}} \frac{\overline{\gamma}_j}{S_k}\operatorname{grad} h_j(\overline{x}) +  \sum_{i\in{\cal{I}}}\frac{\overline{\lambda}_i}{S_k} \operatorname{grad} g_i(\overline{x})\right\| \leq \frac{\epsilon_k}{S_k}.
\]
As all the coefficients on the left-hand side are bounded in $[-1,1]$, and by definition of $S_k$, the coefficient vector has a nonzero limit point. Denote it as $\overline{\lambda}$ and $\overline{\gamma}$. By a similar argument as above, taking the limit in $k$, we can obtain
\[
    \left\|\sum_{j\in{\cal{E}}} \overline{\gamma}\operatorname{grad} h_j(\overline{x}) + \sum_{i\in{\cal{I}}\cap {\cal{A}}}\overline{\lambda} \operatorname{grad} g_i(\overline{x})\right\|  = 0,
\]
which contradicts the LICQ condition at $\overline{x}$. Hence, the situation that $\{S_k\}$ is unbounded does not take place, so we are left with the cases where it is bounded, for which we already showed that $\overline{x}$ satisfies KKT condition.
\end{proof}

\section{Proof of Proposition \ref{prop:almsecond}} \label{appendixalmsecond}
\begin{proof}
The proof is adapted from Section 3 in~\cite{andreani2017second}.  Define $\overline{\gamma}_j^k = \gamma_j^{k-1}+\rho_{k-1} h_j(x_k)$. By Proposition \ref{prop:almfirst}, $\overline{x}$ is a KKT point and by taking a subsequence of $\{x_k\}$ if needed, $\overline{\gamma}^k$ is bounded and converges to $\overline{\gamma}$.

For any tangent vector $d\in {\cal{C}}^W(\overline{x})$, we have $\langle d, \operatorname{grad} h_j(\overline{x})\rangle = 0$ for all $j\in {\cal{E}}$. Let $m = |{\cal{E}}|$, and dimension of ${\cal{M}}$ be $n\geq m$.
%\TODO{This $n$ is really unfortunate: in the rest of the paper, it is the number of inequalities; we have used $d$ in many places for the dimension of the manifold, but in this proof $d$ is a tangent vector.. too bad. Let's just leave it as is here.}
%\REPLY{I see, I think I adopted this notation because of the notation used in the literature by the Andreani.}
Let $\varphi$ be a chart such that $\varphi(\overline{x}) = 0$. From~\cite[Prop.~8.1]{lee2012smoothmanifolds}, the component functions of $h_j$ with respect to this chart are smooth. Let $\partial_1\dots \partial_{n}$ be the basis vectors of the given local chart. Let $d = (d_1\partial_1,\dots, d_n\partial_n)$. Define:
    ${\cal{F}}\colon\mathbb{R}^{n+m} \rightarrow \mathbb{R}^m$, i.e. for $x\in\mathbb{R}^n, y\in \mathbb{R}^m, j\in\{1,\dots, m\}$ as 
    \[
        {\cal{F}}_j(x,y) = \langle (y_1\partial_1, \dots, y_m\partial_m, d_{m+1}\partial_{m+1}, \dots, d_n\partial_n), \operatorname{grad} h_j(\varphi^{-1}(x))\rangle_{\varphi^{-1}(x)}.
    \]
If we denote $h_{j}^l$ as the $l$-th coordinate of vector $\operatorname{grad} h_j$ in this system, and $G_x$ as gram matrix for the metric where $G_{x_{p,q}} = \langle\partial_p,\partial_q\rangle_x$, then the above expression can be written as
\[
    {\cal{F}}_j(x,y) = [y_1, \dots, y_m, d_{m+1}, \dots, d_n] G_{\varphi^{-1}(x)} [h_j^1(\varphi^{-1}(x)), \dots h_j^n(\varphi^{-1}(x))]^T.
\]
and by abuse of notation where $[1\dots m]$ means extracting the first $m$ columns, we have\[\frac{\partial {\cal{F}}_j}{\partial y} = \left([h_j^1(\varphi^{-1}(x)), \dots h_j^n(\varphi^{-1}(x))]G_{\varphi^{-1}(x)}\right)_{[1\cdots m]}.\] Notice that $[h^1(\varphi^{-1}(\overline{x})), \dots h^n(\varphi^{-1}(\overline{x}))]$ has full row rank (by LICQ), so it has rank $m$. As $G_{\varphi^{-1}(\overline{x})}$ is invertible, $\frac{\partial {\cal{F}}}{\partial y}(\overline{x})$ must be invertible (reindex the columns from the top of the proof for this $m\times n$ matrix if needed so that the $m$ columns form a full rank matrix). Then, by the implicit function theorem, for a small neighbourhood $U$ of $\varphi^{-1}(\overline{x})$, we have a continuously differentiable function $g:U\rightarrow \mathbb{R}^m$, where $g(\varphi^{-1}(\overline{x})) = [d_1, \dots, d_m]$ and
\[
    {\cal{F}}(x, g(\varphi^{-1}(x))) = 0.
\]
For each $x$ locally around $\overline{x}$, let 
\[
d_x = [g(\varphi^{-1}(x))_1\partial_1, \dots, g(\varphi^{-1}(x))_m\partial_m, d_{m+1}\partial_{m+1}, \dots, d_n\partial_n]\in \T_{x}{\cal{M}}.
\]
These vectors then forms a smooth vector field such that $\langle d_x, \operatorname{grad} h_j(x)\rangle = 0$ for all $j\in{\cal{E}}$, and $d = d_{\overline{x}}$. Then we have that
\begin{eqnarray*}
&&\mathrm{Hess}_x {\cal{L}}_{\rho_{k-1}}(x_k,\gamma^{k-1})(d_{x_k},d_{x_k}) \\
&=& \langle d_{x_k}, \mathrm{Hess} f(x_k) d_{x_k}\rangle + \rho_{k-1} \sum_{j\in {\cal{E}}} \langle d_{x_k}, \nabla_{d_{x}}(h_j(x)+\frac{\gamma_j^{k-1}}{\rho_{k-1}})\operatorname{grad} h_j(x)\rangle_{x_k} \\
&=& \langle d_{x_k}, \mathrm{Hess} f(x_k) d_{x_k}\rangle + \rho_{k-1}\sum_{j\in {\cal{E}}} d_{x}[h_j(x)+\frac{\gamma_j^{k-1}}{\rho_{k-1}}](x_k)\langle d_{x}, \operatorname{grad} h_j(x)\rangle_{x_k} \\
&&\quad\quad+ \sum_{j\in{\cal{E}}} (\rho_{k-1} h_j(x_k)+\gamma_j^{k-1})\langle d_{x}, \nabla_{d_x}\operatorname{grad} h_j(x)\rangle_{x_k}\\
&=& \langle d_{x_k}, \mathrm{Hess} f(x_k) d_{x_k}\rangle +  \sum_{j\in{\cal{E}}} (\rho_{k-1} h_j(x_k)+\gamma_j^{k-1})\langle d_{x}, \nabla_{d_{x}}\operatorname{grad} h_j(x)\rangle_{x_k}\\
&=& \langle d_{x_k}, \nabla_{d_{x_k}} \operatorname{grad} f(x_k)\rangle + \sum_{j\in {\cal{E}}} \overline{\gamma}_j^{k}\langle d_{x}, \nabla_{d_{x}}\operatorname{grad} h_j(x)\rangle_{x_k}
\end{eqnarray*}
where the second equality is by definition of connection; the third is by orthogonality of $d$ with $\{\operatorname{grad} h_j\}$; the fourth is from the definition of Hessian and $\overline{\gamma}$. Therefore we have
\[
    \langle d_{x}, \nabla_{d_{x}} \operatorname{grad} f(x)\rangle_{x_k} + \sum_{j\in{\cal{E}}} \overline{\gamma}_j^k\langle d_{x}, \nabla_{d_{x}}\operatorname{grad} h_j(x)\rangle_{x_k} \geq -\epsilon_k \|d_{x_k}\|^2
\]
Since the connection maps two continuously differentiable vector fields to a continuous vector field, we can take a limit and state:
\[
    \langle d, \nabla_{d} \operatorname{grad} f(\overline{x})\rangle + \sum_{j\in{\cal{E}}} \overline{\gamma}_j\langle d, \nabla_{d}\operatorname{grad} h_j(\overline{x})\rangle \geq 0
\]
which is just $\mathrm{Hess} {\cal{L}}(\overline{x}, \overline{\gamma})(d,d) \geq 0$.
\end{proof}

\section{Proof of Proposition~\ref{l1sum:prop}}\label{appendl1sum:prop}
In the proof below, we use the following notation:
\begin{equation} \label{set:f1}
  v \in F'(x^*,\lambda^*, \gamma^*) \Leftrightarrow \begin{cases}
            v\in \mathrm{T}_{x^*}{\cal{M}}, &\\
    \langle \operatorname{grad} h_j(x^*), v\rangle = 0& \textrm{ for all }j\in {\cal{E}},\textrm{ and} \\
    \langle \operatorname{grad} g_i(x^*), v\rangle \leq 0& \textrm{ for all }i\in {\cal{A}}(x^*)\cap{\cal{I}}.\\
  \end{cases}
\end{equation}

\begin{proof}
    Consider the function $Q$, defined by:
    \[
        Q(x, \rho) = f(x) + \rho \left(\sum_{i\in{\cal{I}}}\text{max}\{0, g_i(x)\} + \sum_{j\in{\cal{E}}}|h_j(x)|\right).        
    \]
    In a small enough neighbourhood of $x^*$, terms for inactive constraints disappear and $Q$ is just:
    \[
        Q(x, \rho) = f(x) + \rho \left(\sum_{i\in{\cal{A}}(x^*)\cap{\cal{I}}}\text{max}\{0, g_i(x)\} + \sum_{j\in{\cal{E}}}|h_j(x)|\right).
    \]
    Although $Q$ is nonsmooth, it is easy to verify that it has directional derivative in all directions: 
    \begin{eqnarray*}
    \lim_{\tau\rightarrow 0 }\frac{\text{max}\{0,g_i(\mathrm{Exp}_{x^*}(\tau d))\}-\text{max}\{0,g_i(x^*)\}}{\tau} & = &  \lim_{\tau\rightarrow 0 }\frac{\text{max}\{0,g_i(\mathrm{Exp}_{x^*}(\tau d))\}}{\tau}
    \end{eqnarray*}
    and since $g_i\circ\mathrm{Exp}_{x^*}$ is sufficiently smooth, discussing separately the sign of $\frac{d}{d\tau}(g_i\circ \mathrm{Exp}_{x^*})(\tau d)$, we have the right hand side equal to $\text{max}\{0, \frac{d}{d\tau}(g_i\circ \mathrm{Exp}_{x^*})(\tau d)\} = \max\{0,\langle\operatorname{grad} g_i(x^*), d\rangle\}$. Similarly, we have 
    \[
        \lim_{\tau\rightarrow 0 }\frac{|h_j(\mathrm{Exp}_{x^*}(\tau d))|-|h_j(x^*)|}{\tau}  =  \left|\frac{d}{d\tau}(h_j\circ \mathrm{Exp}_{x^*})(\tau d)\right| = |\langle\operatorname{grad} h_j(x^*), d\rangle|.
    \]
    Hence, the directional derivative along direction $d$, $Q(x^*,\rho; d)$, is well defined:
    \begin{equation}\label{exactProof:1}
    Q(x^*,\rho;d) =\langle\operatorname{grad} f(x^*), d\rangle + \rho\left(\sum_{i\in{\cal{A}}(x^*)\cap{\cal{I}}}\max\{0,\langle\operatorname{grad} g_i(x^*), d\rangle\} + \sum_{j\in{\cal{E}}}|\langle\operatorname{grad} h_j(x^*), d\rangle|\right).
    \end{equation}
    As $x^*$ is a KKT point,
    \[
    \operatorname{grad} f(x^*) + \sum_{i\in{\cal{A}}(x^*)\cap{\cal{I}}} \lambda_i^* \operatorname{grad} g_i(x^*) + \sum_{j\in{\cal{E}}} \gamma_j^* \operatorname{grad} h_j(x^*) = 0.
    \]
    Thus,
    \begin{eqnarray*}
    0 & = & \langle \operatorname{grad} f(x^*),d\rangle + \sum_{i\in{\cal{A}}(x^*)\cap{\cal{I}}} \lambda_i^* \langle\operatorname{grad} g_i(x^*),d\rangle + \sum_{j\in{\cal{E}}} \gamma_j^* \langle\operatorname{grad} h_j(x^*),d\rangle \\
    & \leq & \langle \operatorname{grad} f(x^*),d\rangle + \sum_{i\in{\cal{A}}(x^*)\cap{\cal{I}}} \lambda_i^* \max\{0, \langle\operatorname{grad} g_i(x^*),d\rangle\}+ \sum_{j\in{\cal{E}}} \gamma_j^* |\langle\operatorname{grad} h_j (x^*),d\rangle|.
    \end{eqnarray*}
    Combining with equation (\ref{exactProof:1}), we have
    \begin{equation}\label{exactProof:2}
        Q(x^*, \rho;d) \geq \sum_{i\in{\cal{A}}(x^*)\cap{\cal{I}}} (\rho-\lambda_i^*) \max\{0, \langle\operatorname{grad} g_i(x^*),d\rangle\} + \sum_{j\in{\cal{E}}} (\rho-\gamma_j^*) |\langle\operatorname{grad} h_j(x^*),d\rangle|.
    \end{equation}
    For contradiction, suppose $x^*$ is not a local minimum of $Q$. Then, there exists $\{y_k\}_{k=1}^\infty$, $\lim_{k\rightarrow \infty} y_k = x^*$ such that $Q(y_k, \rho) < Q(x^*, \rho) = f(x^*)$. By restricting to a small enough neighbourhood, there exists $\eta_k = \mathrm{Exp}^{-1}_{x^*}(y_k)$. Considering only a subsequence if needed, we have $\lim_{k\rightarrow \infty} \frac{\eta_k}{\|\eta_k\|} = \bar{\eta}$. It is easy to see that $Q(\mathrm{Exp}_{x^*}(\cdot),\rho)$ is locally Lipschitz continuous at $0_{x^*}$, which gives
    \[
        Q(\mathrm{Exp}_{x^*}(\|\eta_k\| \bar{\eta}), \rho) = Q(\mathrm{Exp}_{x^*}(\eta_k), \rho) + o(\|\eta_k\|) = Q(y_k, \rho) + o(\|\eta_k\|).
    \]
    Subtract $Q(x^*,\rho)$ and take the limit:
    \[
        \lim_{k\rightarrow \infty} \frac{Q(\mathrm{Exp}_{x^*}(\|\eta_k\| \bar{\eta}), \rho) - Q(x^*,\rho)}{\|\eta_k\|} =\lim_{k\rightarrow \infty} \frac{Q(y_k, \rho) - Q(x^*,\rho)}{\|\eta_k\|}  + \lim_{k\rightarrow \infty} \frac{o(\|\eta_k\|)}{\|\eta_k\|}\leq 0.
    \]
    Notice the left-most expression is just $Q(x^*, \rho; \bar{\eta})$. Since coefficients on the right-hand side of~\eqref{exactProof:2} are strictly positive, we must have $\langle \operatorname{grad} g_i(x^*), \bar{\eta}\rangle \leq 0$ and $\langle \operatorname{grad} h_j(x^*), \bar{\eta}\rangle = 0$. Since the exponential mapping is of second order, we have a Taylor expansion for $f$,
    \[
        f(y_k) = f(x^*) + \langle \operatorname{grad} f(x^*), \eta_k\rangle + \frac{1}{2}\langle \eta_k, \mathrm{Hess} f(x^*)[\eta_k]\rangle + o(\|\eta_k\|^2),
    \]
    and similarly for $g_i$ and $h_j$. Notice that 
    \begin{eqnarray*}
    Q(y_k, \rho) & = & f(y_k) + \rho \left(\sum_{i\in{\cal{A}}(x^*)\cap{\cal{I}}}\max\{0, g_i(y_k)\} + \sum_{j\in{\cal{E}}}|h_j(y_k)|\right)\\
    & \geq & \left(f(y_k) + \sum_{i\in{\cal{A}}(x^*)\cap{\cal{I}}}\lambda_i^* g_i(y_k) + \sum_{j\in{\cal{E}}}\gamma_j^*h_j(y_k)\right) +   \sum_{i\in{\cal{A}}(x^*)\cap{\cal{I}}}(\rho - \lambda_i^*)\max\{0, g_i(y_k)\} \\
    & & + \sum_{j\in{\cal{E}}}(\rho  - \gamma_j^*)|h_j(y_k)|\\
    & \geq & f(x^*) + \langle \operatorname{grad} f(x^*) + \sum_{i\in{\cal{A}}(x^*)\cap{\cal{I}}} \lambda_i^* \operatorname{grad} g_i(x^*) + \sum_{j\in{\cal{E}}} \gamma_j^* \operatorname{grad} h_j(x^*), \eta_k\rangle \\
    & & + \frac{1}{2}\langle \eta_k, \mathrm{Hess} (f(x^*)+\sum_{i\in{\cal{A}}(x^*)\cap{\cal{I}}} \lambda_i^* g_i(x^*)+\sum_{j\in{\cal{E}}}\gamma_{j}^* h_j(x^*))[\eta_k]\rangle + o(\|\eta_k\|^2)  + P(y_k)\\
    & = & f(x^*) + 0 + \frac{1}{2}\langle \eta_k, \mathrm{Hess}({\cal{L}}(x,\lambda^*,\gamma^*)(x ^*)[\eta_k])\rangle + o(\|\eta_k\|^2) + P(y_k)
    \end{eqnarray*}
        where $P(y_k) = \sum_{i\in{\cal{A}}(x^*)\cap{\cal{I}}}(\rho - \lambda_i^*)\max\{0, g_i(y_k)\} + \sum_{j\in{\cal{E}}}(\rho - \gamma_j^*)|h_j(y_k)|$. The first inequality follows from quadratic approximation of $f + \sum_{i\in{\cal{A}}(x^*)\cap{\cal{I}}} \lambda_{i} g_i+\sum_{j\in{\cal{E}}} \gamma_j h_j$ and bilinearity of the metric. The last equality comes from the definition of KKT points. Dividing the equation through by $\|\eta\|^2$, we obtain
    \begin{equation}\label{exactproof:3}
    \lim_{k\rightarrow\infty} \frac{Q(y_k,\rho) - f(x^*)}{\|\eta\|^2}  = \lim_{k\rightarrow\infty}\frac{1}{2}\left\langle \frac{\eta_k}{\|\eta_k\|}, \mathrm{Hess}({\cal{L}}(x,\lambda^*,\gamma^*)(x ^*)\left[\frac{\eta_k}{\|\eta_k\|}\right]\right\rangle + 0 + \lim_{k\rightarrow\infty}\frac{P(y_k)}{\|\eta\|^2}.
    \end{equation}
    
    If $\bar{\eta}\in F'$, then as $P(y_k)\geq 0$, the first term on the right hand side will be strictly larger than 0, which is a contradiction to $Q(y_k,\rho) < f(x^*)$ for all $k$. If $\bar{\eta}\in F-F'$, then there exists $g_{i'}$ such that $\langle \operatorname{grad} g_{i'}(x), \bar{\eta}\rangle > 0$. Then,
    \[
        g_{i'}(y_k) = g_{i'}(x^*) + \langle \operatorname{grad} g_{i'}(x^*), \eta_k\rangle + o(\|\bar{\eta}\|) = \langle \operatorname{grad} g_{i'}(x^*), \eta_k\rangle + o(\|\eta_k\|).
    \]
    Hence, dividing the above expression by $\|\eta_k\|$ gives
    \[
        \lim_{k\rightarrow\infty} \frac{g_{i'}(y_k)}{\|\eta_k\|} \geq \lim_{k\rightarrow\infty} \langle \operatorname{grad} g_{i'}(x^*), \frac{\eta_k}{\|\eta_k\|}\rangle + 0 = \langle \operatorname{grad} g_{i'}(x^*), \bar{\eta_k}\rangle > 0.
    \]
    Notice that $\frac{P(y_k)}{\|\eta\|^2} \geq \frac{g_{i'}(y_k)}{\|\eta_k\|}$ for large enough $k$ and a contradiction is obtained by plugging it into (\ref{exactproof:3}).
\end{proof}

\section{Proof of Proposition~\ref{prop:smoothconv}}\label{appendlsmoothconv}
\begin{proof}
We give a proof for $Q^{\mathrm{lse}}$---it is analogous for $Q^{\mathrm{lqh}}$.
For each iteration $k$ and for each $i\in{\cal{I}}$ and $j\in{\cal{E}}$, define the following coefficients:
\begin{align*}
\lambda_i^k & = \frac{e^{g_i(x_{k+1})/u_k}}{1+e^{g_i(x_{k+1})/u_k}}, &  \textrm{ and } & & \gamma^k_j & = \frac{e^{h_j(x_{k+1})/u_k}-e^{-h_j(x_{k+1})/u_k}}{e^{h_j(x_{k+1})/u_k}+e^{-h_j(x_{k+1})/u_k}}.
\end{align*}    
Then, a simple calculation shows that (under our assumptions, $\rho_k = \rho_0$ for all $k$; we simply write $\rho$):
\begin{align*}
    \operatorname{grad} Q^{\textrm{lse}}(x_{k+1},\rho_k,u_k) & = \operatorname{grad} f(x_{k+1}) + \rho\sum_{i\in{\cal{I}}} \lambda_i^k \operatorname{grad} g_i(x_{k+1}) + \rho\sum_{j\in{\cal{E}}} \gamma^k_j \operatorname{grad} h_j(x_{k+1}).
\end{align*}
Notice that the multipliers are bounded: $\gamma^k_j \in [-1,1]$ and $\lambda_i^k\in[0,1]$. Hence, as sequences indexed by $k$, they have a limit point: we denote them by $\overline{\gamma}\in [-1,1]$ and $\overline{\lambda}\in [0,1]$. Furthermore, since $\overline{x}$ is feasible, there exists $k_1$ such that for any $k>k_1$, $i\in{\cal{I}}\setminus {\cal{A}}(\overline{x})$, $g_i(x_k) < c$ for some constant $c<0$. Then, as $u_k\rightarrow 0$, by definition, $\lambda^k_i$ goes to 0 for $i\in {\cal{I}}\setminus {\cal{A}}(\overline{x})$. This shows $\overline{\lambda}_i = 0$ for $i\in {\cal{I}}\setminus {\cal{A}}(\overline{x})$. Considering a convergent subsequence if needed, there exists $k_2>k_1$ such that, for all $k>k_2$, $\textrm{dist}(x_k,\overline{x}) < {\mathit{i}}(\overline{x})$ (the injectivity radius). Thus, parallel transport from each $x_k$ to $\overline{x}$ is well defined. Consider
\[
    v = \operatorname{grad} f(\overline{x}) + \rho \sum_{i\in{\cal{I}}\cap {\cal{A}}(\overline{x})}\overline{\lambda}_i \operatorname{grad} g_i(\overline{x}) + \rho \sum_{j\in{\cal{E}}} \overline{\gamma}_j\operatorname{grad} h_i(\overline{x}).
\] Notice that its coefficients are bounded, so we can get $\|v\| = 0$ similar to the proof of Proposition~\ref{prop:almfirst}.
\end{proof}

\end{document}